\newtheorem{thm}{Theorem}[section]
\newtheorem{prop}[thm]{Proposition}
\newtheorem{question}[thm]{Question}
\newtheorem{lemma}[thm]{Lemma}
\newtheorem{defn}[thm]{Definition}
\newtheorem{rem}[thm]{Remark}
\newtheorem{exa}[thm]{Example}
\newcommand{\bbR}{\mathbb{R}}
\newcommand{\bbT}{\mathbb{T}}
\newcommand{\bbC}{\mathbb{C}}
\newcommand{\bbZ}{\mathbb{Z}}
\newcommand{\ip}{\cdot}
\newcommand{\Q}{\mathcal{Q}}
\newcommand{\M}{M}
\newcommand{\mcD}{\mathcal{D}}
\begin{document}

\title[]{Hearing Delzant polytopes from the equivariant spectrum}

\author{Emily B. Dryden}
\address{Department of Mathematics, Bucknell University, Lewisburg, PA 17837, USA}
\email{ed012@bucknell.edu}
\author{Victor Guillemin}
\address{Department of Mathematics, Massachusetts Institute of Technology, Cambridge, MA 02139, USA}
\email{vwg@math.mit.edu}
\author{Rosa Sena-Dias}
\address{Centro de An\'alise Matem\'atica, Geometria e Sistemas Din\^amicos, Departamento de Matem\'{a}tica, Instituto Superior T\'{e}cnico, Av. Rovisco Pais, 1049-001 Lisboa, Portugal}
\email{rsenadias@math.mit.edu}

\date{}
\begin{abstract}
 Let $M^{2n}$ be a symplectic toric manifold with a fixed $\mathbb{T}^n$-action and with a toric K\"ahler metric $g$. Abreu \cite{m2} asked whether the spectrum of the Laplace operator $\Delta_g$ on $\mathcal{C}^\infty(M)$ determines the moment polytope of $M$, and hence by Delzant's theorem determines $M$ up to symplectomorphism.  We report on some progress made on an equivariant version of this conjecture. If the moment polygon of $M^4$ is generic and does not have too many pairs of parallel sides, the so-called \emph{equivariant spectrum} of $M$ and the spectrum of its associated real manifold $M_{\bbR}$ determine its polygon, up to translation and a small number of choices. For $M$ of arbitrary even dimension and with integer cohomology class, the equivariant spectrum of the Laplacian acting on sections of a naturally associated line bundle determines the moment polytope of $M$. 
\end{abstract}

\subjclass[2000]{58J50, 53D20}
\keywords{Laplacian, symplectic manifold, toric, Delzant polytope, equivariant spectrum}

\maketitle


\section{Introduction}
Given a Riemannian manifold $(M,g)$, one can consider the Laplace operator $\triangle_g$ acting on the space of smooth functions on $M$. The spectrum of $\Delta_g$ is the set of all eigenvalues of $\Delta_g$ on $\mathcal{C}^\infty(M)$. It is natural to ask
\begin{question}\label{basic_inverse_spec}
 How much about the geometry of the Riemannian manifold $(M,g)$ does the spectrum of the Laplacian $\triangle_g$ determine?
\end{question}
A priori, the answer to this question could be ``The spectrum of the Laplacian determines $(M,g)$.''  However, there are now many examples of Riemannian manifolds with the same spectrum which are not isometric (e.g., \cite{Gor_survey}, \cite{Mil_tori}, \cite{Sunada_method}).  On the other hand, there are also positive results.  For example,  Tanno showed \cite{Tanno} that if $(M^n,g)$ is a compact orientable Riemannian manifold, then for $n \leq 6$ the spectrum of $\Delta_g$ determines whether $(M^n,g)$ is isometric to $(S^n, \text{round})$.    
 
 Translating Question \ref{basic_inverse_spec} into the setting of symplectic toric geometry, Abreu \cite{m2} asked
\begin{question}\label{q:Abreu}
 Let $M$ be a toric manifold equipped with a toric K\"ahler metric $g$. Does the spectrum of the Laplacian $\Delta_g$ determine the moment polytope of $M$?
\end{question}
\noindent In the spirit of Kac \cite{Kac}, this question can be rephrased as
\begin{question}
 Can one hear the moment polytope of a toric manifold?
\end{question}

A toric manifold $M^{2n}$ is a symplectic manifold with a ``compatible'' $\mathbb{T}^n$-action (see \S\ref{sec:toric_geom} for precise definitions).  Such an action determines a moment map from $M$ to $\mathbb{R}^n$ whose image is a convex polytope, called the \emph{moment polytope} or \emph{Delzant polytope} of $M$. It is a well-known theorem in symplectic geometry that the moment polytope of $M$ determines the symplectomorphism type of $M$.
\begin{thm}\label{thm:Delzant}\cite{Delzant}
The moment polytope of a toric symplectic manifold $M$ determines $M$ up to symplectomorphim.
\end{thm}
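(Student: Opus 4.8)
This is the uniqueness half of Delzant's theorem, and the plan is to pair it with the corresponding existence statement: for each Delzant polytope $\Delta\subset\bbR^n$ I would build an explicit model toric manifold $M_\Delta$ and then show that every toric $M$ with moment polytope $\Delta$ is equivariantly symplectomorphic to $M_\Delta$, which gives both halves at once. Write $\Delta=\{x\in\bbR^n:\ \langle x,u_i\rangle\ge\lambda_i,\ i=1,\dots,d\}$, where $d$ is the number of facets and $u_i\in\bbZ^n$ is the primitive inward normal to the $i$-th facet; the Delzant condition is that the normals meeting at each vertex form a $\bbZ$-basis of $\bbZ^n$.

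First I would carry out the Delzant construction. Let $\pi\colon\bbR^d\to\bbR^n$ send the $i$-th standard basis vector to $u_i$. By the Delzant condition $\pi$ maps $\bbZ^d$ onto $\bbZ^n$, so the induced homomorphism $\bbT^d\to\bbT^n$ has kernel a connected subtorus $N$ of dimension $d-n$. Equip $\bbC^d$ with its standard symplectic form and the Hamiltonian $\bbT^d$-action with moment map $\phi(z)=\tfrac12(|z_1|^2,\dots,|z_d|^2)$, restrict the action to $N$, and form the symplectic reduction $M_\Delta:=\phi_N^{-1}(c)/N$, where $\phi_N$ is the moment map for the $N$-action and $c$ is the level determined by the $\lambda_i$. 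The Delzant condition at every vertex guarantees that $N$ acts freely on $\phi_N^{-1}(c)$, so $M_\Delta$ is a smooth compact connected symplectic manifold carrying a residual Hamiltonian action of $\bbT^n=\bbT^d/N$, and a direct computation identifies its moment image with $\Delta$. This settles existence.

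For uniqueness, let $(M,\omega,\mu)$ be any toric manifold with moment polytope $\Delta$. Over the open dense set $\mu^{-1}(\mathring\Delta)$ the $\bbT^n$-action is free and $\mu$ is a proper Lagrangian torus fibration, so the Arnold--Liouville (action--angle) construction gives an equivariant symplectomorphism $\mu^{-1}(\mathring\Delta)\cong\mathring\Delta\times\bbT^n$ carrying $\omega$ to the standard form, canonical up to the choice of a Lagrangian section and in particular up to one global $\bbT^n$-translation; thus $M$ and $M_\Delta$ already agree over $\mathring\Delta$. To extend the identification over the boundary I would invoke the equivariant symplectic normal form at a point mapping to the relative interior of a codimension-$k$ face: the Delzant condition forces such a neighbourhood to be equivariantly symplectomorphic to a fixed model built from $\bbC^k$ and $T^*\bbT^{n-k}$ depending only on the combinatorics of that face, so $M$ and $M_\Delta$ are also locally identified there. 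Patching the action--angle identification with these face normal forms by a $\bbT^n$-invariant partition of unity, and then correcting the resulting equivariant diffeomorphism to be symplectic via an equivariant Moser argument, produces the desired equivariant symplectomorphism $M\cong M_\Delta$, whence $M$ is determined by $\Delta$.

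The main obstacle is this final gluing: the action--angle identification over $\mathring\Delta$ and the normal forms near the various faces must be reconciled into a single globally defined $\bbT^n$-equivariant diffeomorphism, and Moser's method must then be run equivariantly to upgrade it to a symplectomorphism while keeping track of the residual ambiguities---a translation of $\Delta$ in $\bbR^n$ on the polytope side and a $\bbT^n$-translation on the manifold side. The reduction computation and the local normal forms, by contrast, are routine once the Delzant hypothesis is in hand.
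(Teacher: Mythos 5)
First, a point of comparison: the paper gives no proof of this statement. Theorem \ref{thm:Delzant} is quoted from \cite{Delzant}, and the only related material in the text is the recollection of Delzant's reduction construction in \S\ref{sec:toric_geom} (the exact sequence \eqref{exactseq} and the quotient $h^{-1}(c)/N$). So your proposal must be measured against the standard proof rather than against anything in the paper. Your existence half --- realizing $M_\Delta$ as the reduction $\phi_N^{-1}(c)/N$ of $\bbC^d$ by the kernel $N$ of $\bbT^d\to\bbT^n$ --- is exactly Delzant's construction and agrees with what the paper recalls. The local ingredients of your uniqueness half (action--angle coordinates over $\mathring\Delta$, the Guillemin--Sternberg/Marle equivariant normal form near points lying over a codimension-$k$ face) are also the right ones.

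The genuine gap is the final gluing step. You propose to patch the action--angle identification with the face normal forms ``by a $\bbT^n$-invariant partition of unity'' and then correct by an equivariant Moser argument. Equivariant diffeomorphisms do not form a convex set, so a partition of unity applied to the identifications themselves does not produce a map, let alone one close enough to symplectic for Moser to start; as written this step fails. The standard repair (Delzant's own argument) is cohomological: cover $\Delta$ by open sets over which both manifolds are equivariantly symplectomorphic to the local model; the ambiguity in each local identification is an automorphism of the model commuting with the moment map, i.e.\ a fiberwise translation by a closed $1$-form whose periods lie in $\bbZ^n$; these automorphisms form a sheaf on $\Delta$ which is an extension of the locally constant sheaf $\bbZ^n$ by a fine sheaf, and the relevant $H^1$ vanishes because $\Delta$ is convex. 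The partition of unity enters legitimately only there, to kill the fine part; the $\bbZ^n$ (monodromy) part --- which your sketch never addresses --- is handled by convexity of $\Delta$, and no final Moser step is then needed. Without replacing your patching step by some version of this \v{C}ech argument, the proposal does not close.
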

\noindent Thus, if the answer to Abreu's question is yes, the spectrum of the Laplacian of a symplectic toric manifold determines its symplectomorphism type. 

We examine a modified version of Abreu's question, replacing the spectrum of the Laplacian by what we call the \emph{equivariant spectrum} of the Laplacian. This is simply the spectrum of the Laplacian together with, for each eigenvalue, the weights of the representation of $\bbT^n$ on the eigenspace corresponding to the given eigenvalue.  Question \ref{q:Abreu} then becomes
\begin{question}\label{q:ours}
 Let $M$ be a toric manifold equipped with a toric K\"{a}hler metric $g$. Does the equivariant spectrum of $\Delta_g$ on $\mathcal{C}^\infty(M)$ determine the moment polytope of $M$?
\end{question}

We will use heat invariant techniques to study this question.  Given a Riemannian manifold $(M,g)$, let $\text{Spec}(M)$ be the set of eigenvalues of $\Delta_g$.  When $M$ is compact, a fundamental solution of the heat equation, or \emph{heat kernel}, is uniquely determined.  The trace of the heat kernel $K(t,x,y)$ is given by $Z(t) = \int_M K(t,x,x) dx$, satisfies
\[
Z(t) = \sum_{\lambda \in \text{Spec}(M)} e^{-\lambda t},
\]
and has an asymptotic expansion as $t$ goes to zero; this expansion yields \emph{heat invariants}, which have proven to be an important tool in studying inverse spectral problems related to Question \ref{basic_inverse_spec}.  For example, they show that geometric quantities such as the volume, the dimension, and certain quantities involving the curvature of $M$ are determined by $\text{Spec}(M)$.      

In the present setting, the torus action gives a family of isometries of $M$; Donnelly \cite{Don1} gives an asymptotic expansion of the heat trace in the presence of an isometry, and we will use this expansion to glean geometric data from the equivariant spectrum (see \S\ref{sec:heat}).  
The leading-order term appearing in Donnelly's formula depends on the dimension of the fixed point set of the isometry considered. We will see that this is largest when the isometry corresponds to an element in the torus which is perpendicular to a facet of the moment polytope of $M$, where we are identifying the torus with the dual of its Lie algebra.  Thus the equivariant spectrum tells us when an element in the torus is perpendicular to a facet; moreover, we can recover the volumes of facets from the coefficient of the leading-order term in Donnelly's expansion.

Combining these ideas with combinatorial and geometric arguments and the usual heat invariants for the real manifold $\M_{\bbR}$ naturally associated to a toric manifold $M$, we will prove that we can hear many Delzant polygons.
\begin{thm}\label{equi_abreu's}
Let $M^4$ be a toric symplectic manifold with a fixed torus action and a toric metric. Given the equivariant spectrum of $M$ and the spectrum of $M_{\bbR}$, we can reconstruct the moment polygon $P$ of $M$ up to two choices and up to translation for generic polygons with no more than $2$ pairs of parallel sides.
\end{thm}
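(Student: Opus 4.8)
The plan is to combine three things: Donnelly's small-time expansion of the equivariant heat trace, one classical heat invariant of $M_\bbR$, and a purely combinatorial reconstruction of a Delzant polygon from its edge directions and lengths. Because the equivariant spectrum records, for each eigenvalue $\lambda$, the full multiset of $\bbT^2$-weights on the $\lambda$-eigenspace, one can form, for every $\xi\in\bbT^2$, the equivariant heat trace $Z(t,\xi)=\sum_\lambda e^{-\lambda t}\,\chi_\lambda(\xi)$, where $\chi_\lambda$ is the character of the $\bbT^2$-action on the $\lambda$-eigenspace, and feed it into Donnelly's formula.

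First I would read the facets of $P$ off $Z(t,\xi)$. Its leading term as $t\to0^{+}$ is $(4\pi t)^{-m(\xi)/2}$ times the integral over the top-dimensional component(s) of $\mathrm{Fix}(g_\xi)$ of $|\det(\mathrm{Id}-dg_\xi)|^{-1/2}$ along the normal bundle, where $m(\xi)=\dim\mathrm{Fix}(g_\xi)$. Since $\dim M=4$ one has $m(\xi)\in\{0,2,4\}$, with $m(\xi)=4$ only for $\xi=e$; a direct inspection of the torus action on the moment polygon shows that $m(\xi)=2$ exactly when $\xi$ lies on the circle subgroup $T_i\subset\bbT^2$ generated by a primitive inward normal $\nu_i$ of a facet $F_i$, in which case the top stratum of $\mathrm{Fix}(g_\xi)$ is the complex curve $\mu^{-1}(\overline{F_i})$, or the disjoint union $\mu^{-1}(\overline{F_i})\sqcup\mu^{-1}(\overline{F_j})$ when $F_i$ and $F_j$ are a pair of parallel facets (so that $T_i=T_j$). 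Thus, by recording for which $\xi$ the leading term of $Z(t,\xi)$ grows like $t^{-1}$, one recovers the finite set of normal lines $\{\bbR\nu_i\}$. Moreover, on the normal bundle of $\mu^{-1}(\overline{F_i})$ the isometry $dg_\xi$ acts by multiplication by an explicit unit complex number depending only on $\xi$, so $|\det(\mathrm{Id}-dg_\xi)|^{-1/2}$ is a known constant; dividing it out of the leading coefficient, and using that $g$ is K\"ahler so that the Riemannian area of the complex curve $\mu^{-1}(\overline{F_i})$ equals its symplectic area — a universal constant times the lattice length $\ell_i$ of $F_i$ — one recovers $\ell_i$ for each facet, and the sum $\ell_i+\ell_j$ for each parallel pair.

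Next I would bring in $M_\bbR$. As the real locus of a toric $4$-manifold it is a closed surface, so by Gauss--Bonnet one of its heat invariants equals $\tfrac{2\pi}{3}\chi(M_\bbR)$; since a toric symplectic $4$-manifold whose moment polygon has $d$ edges satisfies $\chi(M_\bbR)=4-d$, the spectrum of $M_\bbR$ determines $d$. Comparing $d$ with the number $m$ of distinct normal lines from the previous step gives the number $k=d-m$ of pairs of parallel sides, which by hypothesis lies in $\{0,1,2\}$; this is precisely the piece of information the equivariant spectrum alone cannot supply, since it only ever returns a length or a sum of two lengths for each normal line. It then remains to assemble $P$, up to translation, from the set of $m$ normal lines, the associated lengths, the value of $d$, and the Delzant condition. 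For each of the finitely many ways to choose which $k$ lines are the repeated ones and to orient each remaining normal line, convexity forces the cyclic order of the edges to be the angular order of the resulting outward normals; one discards choices for which these do not positively span $\bbR^2$ or for which some cyclically consecutive inward normals fail $|\det(\nu_i,\nu_{i+1})|=1$, and fixes the lengths inside each parallel pair from the closure relation $\sum_i\ell_i p_i=0$ ($p_i$ the primitive edge vector of $F_i$), which is two scalar equations in the $k$ unknown length-differences, keeping only choices yielding positive lengths. Given all directed normals and all edge lengths, $P$ is recovered up to translation by walking around its boundary; since $P$ and its reflection $-P$ carry identical data, ``two choices'' is the best one can hope for.

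The main obstacle is the combinatorial lemma that, under the genericity hypothesis and $k\le2$, this procedure returns \emph{only} the two polygons $\pm P$. Two points must be controlled: that a generic configuration of the lines $\bbR\nu_i$ admits no $GL(2,\bbZ)$-symmetry other than $\pm\mathrm{Id}$ (so that the two outputs are not secretly joined by further hidden symmetries, and no genuinely different polygon shares the data), and — the delicate part — that no ``mixed'' choice of orientations, flipping a proper nonempty subset of the normal lines, can be completed, consistently with convexity, the Delzant condition, closure, and positivity of lengths, to a Delzant polygon carrying the prescribed data. Both hypotheses are used here essentially: with three or more pairs of parallel sides the closure system becomes underdetermined and genuine alternative polygons appear, while without genericity coincidences among slopes or edge lengths create spurious reconstructions; ruling these out in the remaining range is where the real work lies, and the rest of the argument is bookkeeping.
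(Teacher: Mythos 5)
Your overall architecture is the same as the paper's: Donnelly's expansion applied to the equivariant heat trace detects which $\xi$ have a two\--dimensional fixed stratum and hence recovers the unsigned normal lines and, via the leading coefficient and the symplectic-volume comparison $\mathrm{vol}_\omega(\phi^{-1}(F))=2\pi\,\mathrm{vol}(F)$, the length of each facet or the sum of lengths over a parallel pair; the spectrum of $M_{\bbR}$ gives $\chi(M_{\bbR})=4-d$ by Gauss--Bonnet and hence the number of edges; and a combinatorial step assembles the polygon. (One small slip: the density in Donnelly's leading term is $|\det(\mathrm{Id}-A)|^{-1}$ on the normal bundle, not the $-1/2$ power; this does not affect the strategy since either way it is an explicitly computable factor to divide out.)

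The genuine gap is exactly where you say ``the real work lies, and the rest of the argument is bookkeeping'': you never carry out that work, and it is not bookkeeping. Concretely, you must rule out (i) a different assignment of which $k$ normal lines are the repeated ones, (ii) a ``mixed'' sign choice flipping a proper subset of the edge vectors, and (iii) non-uniqueness of the length split inside parallel pairs; you handle (iii) correctly (the closure relation $\alpha_1 e_1+\alpha_2 e_2=0$ with $e_1,e_2$ independent kills the ambiguity, which is precisely where $k\le 2$ enters), but (i) and (ii) are left as an aspiration that ``genericity'' will save you. The paper does not prove that these ambiguities are absent for all polygons with at most two parallel pairs --- indeed they need not be. Instead it \emph{defines} generic to mean exactly that $P$ admits a unique consistent choice of repeated normals and contains no subpolygon (no proper subset of at least three edges, with complement of size at least three, summing to zero), encodes each potential ambiguity of type (i) or (ii) as a linear relation of the form $a_1e_1+a_2e_2+\alpha e_3+\beta e_4+\sum_{i\in I}e_i=0$, and shows by perturbing the facet constants $\lambda_i$ (which preserves the Delzant condition) that these finitely many relations can all be made to fail, so that generic polygons in this sense are dense. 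Once ambiguities (i)--(iii) are excluded, the remaining freedom is the choice of initial edge, its sign, and the sign of its normal --- $4d$ choices in all --- and the paper's Lemma \ref{4dto2}, resting on the ``most obtuse angle'' Lemma \ref{most_obtuse} (your observation that convexity forces the cyclic order), shows these all produce translates of just two polygons $P_{e_1,u_1}$ and $P_{e_1,-u_1}$. To complete your proof you would either have to prove your uniqueness claim outright (false without a hypothesis of this kind) or make the genericity hypothesis do the work explicitly, as the paper does.
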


\begin{rem}
Even though we require knowledge of the equivariant and real spectra and a fixed $\bbT^2$-action, we can recover the actual Delzant polygon, up to translation and two choices.  In the original version of the question, the polygon is necessarily recovered only up to an $SL(2, \mathbb{Z})$-transformation.
\end{rem}

Finally, we show that if we consider the Laplacian acting on sections of a line bundle naturally associated to our symplectic toric manifold, we can hear the Delzant polytope.

The paper is organized as follows.  In \S \ref{sec:toric_geom} we give the necessary background from symplectic geometry, including a thorough treatment of fixed point sets of the torus action on a symplectic manifold.  Donnelly's theorem and its consequences are presented in \S \ref{sec:heat}.  
We explore the relationship between the combinatorial constraints of Delzant polygons and their geometry in \S \ref{sec:comb}, and describe in detail the polygons to which our results apply.  Then, in \S \ref{sec:zoo}, we examine how ``frequent'' these polygons are among the set of all Delzant polygons.  By replacing the equivariant spectrum of the Laplacian acting on functions by the equivariant spectrum of a natural line bundle associated to our toric manifold, we obtain results in arbitrary even dimension in \S \ref{sec:bundle}.  We end with some concluding remarks.

\vspace{.5cm}
\noindent \textbf{Acknowledgments:} We are very grateful to Ana Rita Pires for her enthusiasm and insightful formulation of the statement of Lemma \ref{most_obtuse}.  The first and third authors appreciate the hospitality shown to them by the Mathematics Department at MIT during their visits there.  We would also like to thank the referee for a careful reading of and helpful comments on an earlier version of this paper.


\section{Some toric geometry}\label{sec:toric_geom}

\subsection{Background}

We begin by recalling some definitions and well-known facts related to toric manifolds.  For more details and background on symplectic and toric geometry, a good general reference is \cite{CdaS}.  

\begin{defn}
 A \emph{symplectic toric manifold} $\M^{2n}$ is a compact connected symplectic manifold $(\M,\omega)$ with an effective Hamiltonian $\bbT^n$-action.
\end{defn}
Such an action has a corresponding moment map $\phi:\M \rightarrow \mathbb{R}^n$, defined up to translations in $\bbR^n$, where we have identified $\mathbb{R}^n$ with its dual. This moment map depends on the symplectic form $\omega$ but its image (up to translation) does not. It is a convex polytope in $\mathbb{R}^n$ of Delzant type.  
\begin{defn}
 A convex polytope $P$ in $\mathbb{R}^n$ is \emph{Delzant} if
 \begin{enumerate}
 \item there are $n$ edges meeting at each vertex;
 \item for every facet of $P$, a primitive outward normal can be chosen in $\bbZ^n$;
  \item for every vertex of $P$, the outward normals corresponding to the facets meeting at that vertex form a basis for $\bbZ^n$.
\end{enumerate}
\end{defn}

\begin{exa}\label{exa:CP2}
(cf. \cite[p. 173]{CdaS}) Consider the manifold $\mathbb{CP}^2$ equipped with the Fubini-Study form $\omega_{FS}$.  A $\bbT^2$-action on $\mathbb{CP}^2$ is given by
\[
(e^{i \theta_1}, e^{i\theta_2}) \cdot [z_0,z_1,z_2] = [z_0, e^{-i\theta_1}z_1, e^{-i\theta_2}z_2]
\]
with moment map
\[
\phi[z_0,z_1,z_2] = \frac{1}{2} \left( \frac{|z_1|^2}{|z_0|^2 + |z_1|^2 + |z_2|^2}, \frac{|z_2|^2}{|z_0|^2 + |z_1|^2 + |z_2|^2} \right).
\]
The moment polygon $P=\phi(\mathbb{CP}^2)$ is shown in Figure~\ref{fig:CP2}.  One easily checks that it is Delzant.  Note that if we were to define a different $\bbT^2$-action on $\mathbb{CP}^2$ by
\[
(e^{i \theta_1}, e^{i\theta_2}) \cdot [z_0,z_1,z_2] = [z_0, e^{i\theta_1}z_1, e^{i\theta_2}z_2]
\]
then the moment polygon for this action would be $-P$, i.e., the rotation of $P$ about the origin by $\pi$.
\end{exa}

\begin{figure}[h]
\centering
\begin{center}
\setlength{\unitlength}{5mm}
\begin{picture}(5.5,5)(0,0)
\put(0.5,1){\line(1,0){3}}
\put(0.5,1){\line(0,1){3}}
\put(0.5,4){\line(1,-1){3}}
\put(0,0.2){(0,0)}
\put(0,4.3){(0,$\frac{1}{2}$)}
\put(3.8,0.7){($\frac{1}{2}$,0)}
\end{picture}
\end{center}
\caption{The moment polygon $P=\phi(\mathbb{CP}^2)$}
\label{fig:CP2}
\end{figure}
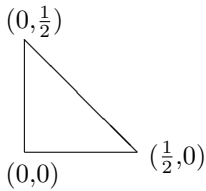

Given a moment polytope $P$ in $\mathbb{R}^n$, Delzant \cite{Delzant} has given a canonical way to associate to it a symplectic manifold $(\M_P,\omega_P)$ together with an effective Hamiltonian torus action $\tau_P$ with moment map $\phi_P$ such that $\phi_P(M_P) = P$; in fact, this is a bijective correspondence.  Moreover, Delzant proved that the moment polytope of a toric manifold determines its symplectic type.
\begin{thm}\cite{Delzant}
 Every toric manifold whose Delzant polytope is $SL(n,\bbZ)$-equivalent to $P$ is equivariantly symplectomorphic to $\M_P$.
\end{thm}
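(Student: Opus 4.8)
The plan is to follow Delzant's original argument: build a model manifold $\M_P$ explicitly by symplectic reduction, and then prove a uniqueness statement forcing any toric manifold with moment polytope $P$ to coincide with it. For the construction, write $P=\{x\in\bbR^n:\langle x,v_i\rangle\le\lambda_i,\ i=1,\dots,d\}$, where $v_1,\dots,v_d\in\bbZ^n$ are the primitive outward normals to the facets of $P$, and let $\pi:\bbR^d\to\bbR^n$ be the linear map with $\pi(e_i)=v_i$. The Delzant hypotheses force $\pi$ to be surjective and to carry $\bbZ^d$ onto $\bbZ^n$, so it induces a surjection $\bbT^d\to\bbT^n$ whose kernel $K$ is a subtorus of dimension $d-n$. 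Endow $\bbC^d$ with its standard symplectic form and $\bbT^d$-action, with moment map $\psi(z)=\tfrac12(|z_1|^2,\dots,|z_d|^2)-\lambda$; restrict the action to $K$ to obtain the moment map $\psi_K=\iota^*\circ\psi$, where $\iota^*$ is dual to the inclusion $\mathrm{Lie}(K)\hookrightarrow\bbR^d$, and set $\M_P:=\psi_K^{-1}(0)/K$ with the reduced symplectic form $\omega_P$. Using conditions (2) and (3) at each vertex one checks that $0$ is a regular value of $\psi_K$ and that $K$ acts freely on $\psi_K^{-1}(0)$, so $\M_P$ is a smooth symplectic $2n$-manifold on which the residual torus $\bbT^n\cong\bbT^d/K$ acts with moment map $\phi_P$ satisfying $\phi_P(\M_P)=P$; near the preimage of a vertex of $P$ one reads off an equivariant symplectic chart modeled on $\bbC^n$ with the linear torus action whose weights are the primitive edge vectors at that vertex (a $\bbZ$-basis, by (3)). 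This produces the existence half of the correspondence.

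For uniqueness, let $(\M,\omega,\bbT^n,\phi)$ be an arbitrary toric manifold with $\phi(\M)=P$ (translating so that the two images literally coincide). The combinatorics of $P$ together with the equivariant normal form near isotropic orbits determine the $\bbT^n$-equivariant diffeomorphism type of $\M$: patching the standard charts over the faces of $P$ yields a $\bbT^n$-equivariant diffeomorphism $F:\M\to\M_P$ with $\phi_P\circ F=\phi$. Pulling $\omega_P$ back by $F$ reduces the problem to comparing two $\bbT^n$-invariant symplectic forms $\omega_0,\omega_1$ on a single manifold that induce the \emph{same} moment map $\phi$. Then $\omega_1-\omega_0$ is closed and, because the two moment maps coincide, it is basic (killed by every vector field generated by $\bbT^n$) and has a canonical invariant primitive $\sigma$; the segment $\omega_t=(1-t)\omega_0+t\omega_1$ consists of symplectic forms (nondegeneracy follows from the shared vertex normal forms), and the Moser vector field $X_t$ determined by $\iota_{X_t}\omega_t=-\sigma$ is $\bbT^n$-invariant, so its time-one flow is a $\bbT^n$-equivariant symplectomorphism from $(\M,\omega_0)$ to $(\M,\omega_1)$. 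Composing with $F$ gives the desired equivariant symplectomorphism $\M\to\M_P$. Finally, if $g\in SL(n,\bbZ)$ carries a polytope $P'$ onto $P$, the corresponding automorphism of $\bbT^n$ identifies $\M_{P'}$ with $\M_P$ equivariantly, so the symplectomorphism type together with the torus action depends only on the $SL(n,\bbZ)$-equivalence class of the polytope.

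I expect the real work — and the main obstacle — to lie in the uniqueness argument rather than the construction. Two points demand care: first, that the equivariant diffeomorphism $F$ can be assembled consistently over \emph{all} the strata of $P$ at once, not just in a neighborhood of a single vertex, which requires controlling how the local models over adjacent faces glue; and second, that the closed basic $2$-form $\omega_1-\omega_0$ admits a primitive that is itself invariant (a mild equivariant Poincar\'e-type input), so that the Moser flow respects the $\bbT^n$-action. Both points hinge on the Delzant basis condition (3): it is precisely what makes the lower-dimensional orbit strata, where the action is not free, behave like the local reduction model, so that the normal-form and cohomological ingredients feeding Moser's trick remain valid there.
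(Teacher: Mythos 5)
The paper does not prove this theorem; it is quoted verbatim from Delzant's paper, and the only fragment of the argument the paper reproduces is the reduction construction of $\M_P$ in \S\ref{sec:toric_geom} (the exact sequence $0\to N\to\bbR^d\to\bbR^n\to 0$, the moment map $h$ on $\bbC^d$, and $\M_P=h^{-1}(c)/N$), which coincides with the existence half of your sketch up to notation and a sign convention. So your proposal must be measured against Delzant's original argument, and in outline it is the standard one: reduction for existence, local normal forms plus a Moser deformation for uniqueness.

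The genuine gap is the step you yourself flag as the ``main obstacle'' and then do not carry out: the construction of the global equivariant diffeomorphism $F:\M\to\M_P$ with $\phi_P\circ F=\phi$. The equivariant normal form gives such identifications only over a neighborhood of each face of $P$; on overlaps two such identifications differ by a $\bbT^n$-equivariant automorphism covering the identity on $P$, and one must show the resulting \v{C}ech cocycle (valued in a sheaf of invariant data over $P$) is a coboundary. This sheaf computation, which uses the convexity and hence contractibility of $P$, is essentially the whole of Delzant's uniqueness proof; writing ``patching the standard charts over the faces of $P$ yields an equivariant diffeomorphism'' names the step without performing it. The Moser half, by contrast, can be completed as you indicate: $\omega_1-\omega_0$ is basic because the moment maps agree, hence descends to a closed $2$-form on the contractible $P$ and admits a basic primitive $\sigma$; and $\omega_t$ is nondegenerate everywhere (not just near vertices) because adding a basic $2$-form to $\sum_i dx_i\wedge d\theta_i$ leaves the top exterior power unchanged, every correction term carrying more than $n$ of the $dx_i$ --- you should argue this explicitly rather than appeal to ``shared vertex normal forms.'' Finally, with outward normals $v_i$ and $P=\{x:\langle x,v_i\rangle\le\lambda_i\}$, the moment map $\psi(z)=\tfrac12(|z_1|^2,\dots,|z_d|^2)-\lambda$ has the wrong sign for $\phi_P(\M_P)$ to equal $P$ rather than a reflection of it; this is a convention slip, not a mathematical error, but it should be fixed for the charts near the vertices to match the edge data as claimed.
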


\noindent Note that we say that two Delzant polytopes $P$ and $P'$ are $SL(n,\bbZ)$-equivalent if there exists $A \in SL(n, \bbZ)$ such that $P' = AP$ as sets.

In addition to the symplectic structure associated to a Delzant polytope $P$, there is also a complex structure $J_P$ associated to $P$ via a natural construction (see \cite{Delzant}, \cite{g1}). This complex structure is invariant under the torus action.  Thus $P$ determines both a symplectic and a complex structure of the associated K\"{a}hler toric manifold, and together these structures determine a torus-invariant Riemannian metric $g_P$. The triple $(\omega_P,J_P,g_P)$ is a K\"ahler structure on the manifold $M_P$ called the \emph{reduced K\"ahler structure}. Taking the symplectic point of view, one gets other torus-invariant Riemannian metrics by starting with a fixed symplectic manifold $(M_P, \omega_P)$ and considering all complex structures on $M_P$ that are compatible with $\omega_P$ and invariant under the torus action.  For such a complex structure $J$, a metric is given by $g(X,Y) = \omega_P(X,JY)$.   
Viewing toric manifolds from the perspective of complex geometry, one starts with a fixed complex manifold $(M_P, J_P)$ and considers all torus-invariant symplectic forms on $M_P$ that are compatible with $J_P$.  For such a symplectic form $\omega$, a torus-invariant metric is given by $g(X,Y) = \omega(X,J_P Y)$.  
It is possible to translate between the symplectic and complex perspectives, and it turns out that both perspectives give rise to the same set of torus-invariant Riemannian metrics.  We refer to these metrics as \emph{K\"ahler toric metrics} or \emph{toric metrics} for short. 
Given a Delzant polytope $P$, the reduced K\"ahler structure $(\M_P,\omega_P,J_P)$ gives a way to build a toric metric from $\omega_P$ and $J_P$. This metric is called the reduced metric and it has been completely determined in \cite{g1}. In \cite{m1} and \cite{m2}, Abreu has shown how to characterize all other toric metrics on $M_P$ using the reduced metric.

We are interested in the spectrum of the Laplacian on a symplectic toric manifold with any such toric metric $g$; the torus action associates some natural additional data to the spectrum. To be more precise, we denote by $\psi:\bbT^n\rightarrow Sympl(M)$ the group homomorphism given by the Hamiltonian torus action. Note that a metric is toric exactly when $\psi(e^{i\theta})$ is an isometry for all $\theta \in \bbR^n$. For each $\theta\in \bbR^n$ and each eigenvalue $\lambda$ of the Laplacian on $(M, g)$, $\psi(e^{i\theta})$ induces an action on the eigenspace corresponding to $\lambda$. This action splits according to weights. 
\begin{defn}
Let $\M$ be a toric manifold with a fixed torus action; denote by $\psi:\bbT^n\rightarrow Sympl(\M)$ the corresponding group homomorphism, and let $g$ be a toric metric on $M$. The \emph{equivariant spectrum} is the list of all the eigenvalues of the Laplacian on $(M,g)$  together with the weights of the action induced by $\psi(e^{i\theta})$ on the corresponding eigenspaces, for all $\theta \in \bbR^n$.  The eigenvalues and weights are listed with multiplicities.        
\end{defn}

\subsection{Fixed point sets}
The goal of this subsection is to study the fixed point sets of the isometries $\psi(e^{i\theta})$; these results will be used in the calculation of the heat invariants. The results that follow are well known but we give proofs for the sake of completeness.  We start by recalling Delzant's construction (see \cite{g1} for more details). 

Let $P$ be a Delzant polytope with $d$ facets. Consider the following exact sequences
\begin{eqnarray}\label{exactseq}
0 \rightarrow N \rightarrow \bbT^d \xrightarrow{\beta'} \bbT^n \rightarrow 0 &,& \\
0  \rightarrow  \mathfrak{n} \xrightarrow{\iota} \bbR^d \xrightarrow{\beta} \bbR^n \rightarrow 0 &.&
\end{eqnarray}
Here $\beta: \bbR^d \rightarrow \bbR^n$ is given by $ \beta(e_i)=u_i$, where $\{e_i\}$ is the canonical basis for $\bbR^d$ and $u_i$ denotes the primitive outward normal to the $i$th facet of the polytope; $\mathfrak{n}$ is the Lie algebra of $N$.  The group $N$ acts symplectically on $\bbC^d$ with moment map
\begin{displaymath}
 h(z)=\sum|z_i|^2\iota^*e_i,
\end{displaymath}
where $\iota^*$ is dual to $\iota$.
The toric manifold associated to $P$ is
\begin{displaymath}
 \M=h^{-1}(c)/N
\end{displaymath}
where $c \in \mathfrak{n}^*$.  We denote the projection map from $h^{-1}(c)$ to $\M$ by $\pi$. The torus $\bbT^d$ acts on $\bbC^d$ and therefore $\bbT^d/N$ acts on $\M$ by 
\begin{displaymath}
 e^{i\theta}\ip [z_1,\ldots, z_d]=[e^{i\theta_1}z_1,\ldots,e^{i\theta_d}z_d],
\end{displaymath}
where $\theta=(\theta_1,\ldots,\theta_d) \in \bbR^d$. The map $\beta'$ gives an identification
$\bbT^d/N\rightarrow \bbT^n$; using this identification we see that $\bbT^n$ acts on $\M$ and, for example,
\begin{displaymath}
 e^{itu_l}\ip [z_1,\ldots, z_d]=[z_1,\ldots,e^{it}z_l,\ldots,z_d].
\end{displaymath}

The usual involution of $\bbC^d$, namely $\sigma(z_1,\ldots,z_d)=(\bar{z_1},\ldots,\bar{z_d})$, descends to an involution of $M$. The fixed point set of $\sigma$ is what we refer to as the real manifold associated to $M$.
\begin{defn}\label{defn:realmfld}
The \emph{real manifold associated to $M$}, denoted by $M_{\bbR}$, is the fixed point set of the involution induced on $M$ by the usual involution on $\bbC^d$.
\end{defn}

Assume we endow $\bbC^d$ with its usual symplectic structure $\frac{i}{2}\sum dz_i \wedge d\bar{z_i}$. The quotient construction preceding Definition \ref{defn:realmfld} gives a symplectic form $\omega_P$. 

We may also construct $M$ as a space with a $\bbT_\bbC^n$-action where $\bbT_\bbC^n$ is the complex torus; see \cite{g1} for details. The advantage of this viewpoint is that it shows that $M$ is also a complex manifold.  The complex structure $J_P$ thus obtained is compatible with the symplectic form on $M$ and together these determine the reduced metric $g_P(X,Y) = \omega_P(X,J_P Y)$ on $M$. The moment map with respect to $\omega_P$ for the $\bbT^n_{\bbC}$-action on $\M$, $\phi_P:\M\rightarrow (\bbR^n)^*$, is given as follows. Consider 
\begin{displaymath}
 \bbC^d\xrightarrow{h} (\bbR^d)^*\xrightarrow{p} (\bbR^d)^*/\mathfrak{n} \xleftarrow{\ \beta^*} (\bbR^n)^*.
\end{displaymath}
The moment map for the $\bbT^n_{\bbC}$-action is given by 
\begin{displaymath}
p\circ h=\beta^*\circ\phi_P \circ \pi.
\end{displaymath}
Therefore
\begin{displaymath}
\left< \phi_P [z_1,\ldots, z_d],u_l \right> = \left< \phi_P [z_1,\ldots, z_d],\beta e_l \right> = \left< \beta^*\phi_P [z_1,\ldots,  z_d],e_l \right> =|z_l|^2+\lambda_l, 
\end{displaymath}
where the $\lambda_l$ correspond to a choice of a constant in the moment map.

Now suppose $M$ has a different toric metric on it compatible with the given complex structure. This metric can be seen to be associated with a different symplectic form, and Delzant's theorem gives a way to relate the two symplectic structures.  We will examine this in more detail in the proof of Lemma \ref{fixed_point_set}, which describes the fixed point sets of $\psi(e^{i\theta})$ for various $\theta \in \bbR^n$. Before stating the lemma, we make some conventions.
\begin{defn}
Given a face $F$ of codimension $r+1$ sitting inside a face $F'$ of codimension $r$ in a convex polytope $P$, we say that a vector is \emph{normal} to $F$ in $F'$ if it is in the linear subspace determined by $F'$ and is orthogonal to the linear subspace determined by $F$.
\end{defn}
\begin{rem}
A vertex in a polytope has maximal codimension in that polytope, and every vector is normal to it.
An $n$-dimensional polytope in $\bbR^n$ is itself a face of codimension $0$ whose only normal is zero.
The normal to a facet is the normal to that facet in the whole polytope. 
\end{rem}
We are now in a position to state our lemma.
\begin{lemma}\label{fixed_point_set}
 Let $\theta\in \bbR^n$. The fixed point set of $\psi(e^{i\theta})$, denoted $F_\theta$, is the union of the pre-images via the moment map of all faces to which $\theta$ is normal in a face of lower codimension.
\end{lemma}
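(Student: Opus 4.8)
Since $F_\theta$ depends only on the torus action $\psi$ and not on the toric metric, I would compute it in Delzant's model. Writing $\omega$ for the invariant symplectic form determined by $g$ and $P=\phi(M)$ for its moment polytope, Theorem~\ref{thm:Delzant} lets us identify $(M,\omega,\psi)$ equivariantly with the model $(\M_P,\omega_P,\tau_P)$ and $\phi$ with $\phi_P$; since all toric metrics produce polytopes with the same face lattice and the same primitive normals $\{u_l\}$, it suffices to argue in this model. From $\langle\phi_P[z],u_l\rangle=|z_l|^2+\lambda_l$ one reads off the fibres of the moment map over faces: if $F$ is a face of codimension $r$ and $I_F\subseteq\{1,\dots,d\}$ indexes the $r$ facets containing $F$, then $\phi_P^{-1}(\overline F)=\{[z]\in M : z_l=0 \text{ for all } l\in I_F\}$ while $\phi_P^{-1}(\operatorname{int}F)=\{[z] : z_l=0 \iff l\in I_F\}$.

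The heart of the argument is the computation of the isotropy group of a point $p=[z]\in\phi_P^{-1}(\operatorname{int}F)$. In the presentation $\bbT^n=\bbT^d/N$ acting coordinatewise on $M=h^{-1}(c)/N$, the $\bbT^d$-stabilizer of $z$ is the coordinate subtorus indexed by $I_F$, so the $\bbT^n$-stabilizer of $p$ is its image under $\bbT^d\to\bbT^d/N\xrightarrow{\beta}\bbT^n$, namely $T_F:=\{e^{iv} : v\in V_F\}$ with $V_F:=\operatorname{span}_{\bbR}\{u_l : l\in I_F\}$. Here I would invoke the Delzant conditions: evaluated at any vertex of $F$ the normals $\{u_l : l\in I_F\}$ extend to a $\bbZ$-basis of $\bbZ^n$, so they are independent and $T_F$ is a closed connected subtorus of dimension $r$ with Lie algebra $V_F$. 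Hence $p$ is fixed by $\psi(e^{i\theta})$ exactly when $e^{i\theta}\in T_F$, i.e. when $\theta\in V_F$ modulo the integer lattice; for $\theta$ near the origin --- all that the heat-trace estimates of \S\ref{sec:heat} use --- this reads simply $\theta\in V_F$, and I would exclude from the statement the trivial case $\psi(e^{i\theta})=\operatorname{id}$, for which $F_\theta=M$.

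It remains to recognize $\theta\in V_F$ as the condition in the lemma. Let $W_F$ be the linear subspace parallel to the affine span of $F$, so $\dim W_F=n-r$; each equation $\langle\,\cdot\,,u_l\rangle=\lambda_l$ with $l\in I_F$ is constant on $F$, whence $W_F\perp u_l$ for $l\in I_F$ and therefore $W_F\perp V_F$, and comparing dimensions gives $V_F=W_F^{\perp}$. Thus $p\in F_\theta$ iff $\theta\perp W_F$, which for $F\neq P$ is precisely the assertion that $\theta$ is normal to $F$ inside some face of strictly smaller codimension (one may always take that face to be $P$ itself, whose linear span is all of $\bbR^n$). Finally, $\phi_P^{-1}(\overline F)$ is the union of the $\phi_P^{-1}(\operatorname{int}F'')$ over faces $F''\subseteq\overline F$, and $W_{F''}\subseteq W_F$, so each such $F''$ again satisfies $\theta\perp W_{F''}$; conversely every face on which $\psi(e^{i\theta})$ acts trivially arises this way. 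Assembling, $F_\theta=\bigcup_F\phi_P^{-1}(\overline F)$ over the faces $F$ to which $\theta$ is normal in a face of lower codimension, as claimed. The one genuinely content-bearing point, and the step I would expect to require the most care, is the claim that the isotropy of a point over $\operatorname{int}F$ is exactly the connected torus $T_F$ generated by the facet normals through $F$: this is where the Delzant smoothness hypothesis is used, and it is what prevents spurious extra fixed-point components; the reduction to the model and the dimension count are routine bookkeeping.
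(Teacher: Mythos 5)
Your proposal is correct, and it rests on the same pillars as the paper's proof: Delzant's quotient construction, the identity $\langle\phi_P[z],u_l\rangle=|z_l|^2+\lambda_l$ identifying moment-map fibres over faces with coordinate subvarieties, and the reduction of an arbitrary toric metric to the reduced one via a $\bbT^n$-equivariant symplectomorphism intertwining the moment maps. The route differs in one organizational but substantive way: you compute, once and for all, the full isotropy subgroup of a point over the open stratum $\operatorname{int}F$ as the subtorus $T_F$ with Lie algebra $V_F=\operatorname{span}\{u_l: l\in I_F\}$, and then read off $F_\theta$ by asking when $e^{i\theta}\in T_F$; the paper instead fixes $\theta$ and proceeds through a hierarchy of cases (generic $\theta$, facet normals, then faces of higher codimension by restricting to the sub--toric manifold $M_{F'}$ and invoking the facet case there). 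Your version buys two things: it avoids the induction through $M_{F'}$, and it makes explicit that the fixed-point condition on the stratum of $F$ is simply $\theta\in W_F^{\perp}$ (equivalently, $\theta$ normal to $F$ in $P$ itself), which disambiguates the phrase ``normal in a face of lower codimension'' --- under the strictest reading of the paper's definition (codimension exactly one less), a vector such as the sum of two facet normals through an edge of a $3$-dimensional polytope would fix the preimage of that edge without being normal to it in any codimension-one-less face, so your reading is the one under which the lemma is true. The paper's case-by-case proof, in exchange, isolates exactly the directions (facet normals and their higher-codimension analogues) that are used in the heat-trace computation of \S\ref{sec:heat}. Your caveats about the lattice ambiguity $\theta\mapsto e^{i\theta}$ and the trivial isometry are appropriate and are glossed over in the paper.
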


\begin{proof}
More specifically, we prove the following statements.
\begin{itemize}
\item If $\theta$ is generic then $F_\theta$ is the pre-image via the moment map of the vertices of $P$.
\item If $\theta$ is a (nonzero) multiple of a normal to a facet of $P$, then $F_\theta$ contains the union of the pre-image via the moment map of the corresponding facet with the pre-image of the vertices of $P$.
\item Let $F$ be a face of codimension $(r+1)$ sitting inside a face $F'$ of codimension $r$. Let $\theta$ be a (nonzero) multiple of the normal to $F$ in $F'$.  Then $F_\theta$ contains the union of the pre-image of $F$ via the moment map with the pre-image of the vertices of $P$.
\item Given any $\theta$, $F_\theta$ is the union of the pre-images via the moment map of all faces to which it is normal with respect to a face of lower codimension.
\end{itemize}

This result depends on the metric on $M$ via the moment map. We first treat the case when the metric is reduced and no vector is normal to more than one face (ignoring vertices). We begin by noting that from the characterization of the moment map $\phi_P$ given by
\begin{displaymath}
\langle \phi_P[z_1,\ldots, z_d],u_l\rangle=|z_l|^2+\lambda_l,
\end{displaymath}
we see that the pre-image of a facet $\{x\in\bbR^n: x\ip u_l-\lambda_l=0\}$ in $P$ via the moment map is 
\begin{displaymath}
\{[z_1,\ldots, z_d]\in \M : z_l=0\}.
\end{displaymath}
In the same way we can describe the pre-image of any face of positive codimension. For example, the pre-image via the moment map of the vertex where the first $n$ edges meet is $[0,\ldots,0, z_{n+1},\ldots, z_d]$. It is fixed by $u_l$ for $l \in \{1,\ldots,n\}$. 

Given $u \in \bbR^n$, the Delzant condition implies that $u$ can be written as a linear combination $u = \alpha_1 u_1+\cdots+\alpha_n u_n$, where $u_1, \ldots, u_n$ are the primitive outward normals to the facets meeting at a given vertex and the $\alpha_i$ are real numbers.  Thus
\begin{displaymath}
e^{iu} \ip [z_1,\ldots z_d]=[e^{i\alpha_1}z_1,\ldots,e^{i\alpha_n}z_n,z_{n+1},\ldots,z_d]
\end{displaymath}
and so the $u$-action always fixes points of the form $[0,\ldots,0, z_{n+1},\ldots, z_d]$, i.e., the pre-image of the vertex. The same reasoning applies to any vertex. We also see from this that for generic $u$ (that is, for generic $\alpha_i$) there are no other fixed points under the $u$-action. This proves the first assertion in the lemma.

We also have
\begin{displaymath}
e^{itu_1}\ip [z_1,\ldots, z_d]=[e^{it}z_1,z_{2},\ldots, z_d].
\end{displaymath}
Since $e_1$ is not in $N$, $[e^{it}z_1,z_{2},\ldots, z_d] = [z_1,\ldots, z_d]$ in $M$ for all $t$ exactly when $z_1=0$. So the fixed point set of $e^{itu_1}$ is the pre-image of the first facet. In general, the fixed point set of $e^{itu_l}$ is the pre-image of the $l$th facet.  This proves the second assertion. 

We will now use the second assertion to prove the third one. Consider a face $F'$ which is at the intersection of facets labeled $i_1,\ldots,i_r$, and let $G \subset \bbT^n$ be the sub-torus such that $\mathfrak{G}^* =  \{u_{i_1},\ldots, u_{i_r}\}^{\perp}$ is the dual of its Lie algebra. This sub-torus acts on the pre-image via the moment map of $F'$, denoted $\M_{F'}$, making $\M_{F'}$ into a toric manifold with moment map $\phi_{F'}: M_{F'} \rightarrow \mathfrak{G}^*$. There is an injective map
\begin{displaymath}
 \iota_{F'}: \mathfrak{G}^*\rightarrow (\bbR^n)^*
\end{displaymath}
and we define $\widetilde{\phi_{F'}}:= \iota_{F'} \circ \phi_{F'}$. It is clear that $\phi_P|_{\M_{F'}}=\widetilde{\phi_{F'}}$. The second assertion implies that if $n$ is normal to one of the facets of the image of $\phi_{F'}$, then the fixed point set of the $S^1$-action generated by $n$ in $G$ is the union of the pre-image via $\phi_{F'}$ of the facet with the pre-image of the vertices of $\phi_{F'}(M_{F'})$. Note that the vertices of $\phi_{F'}(M_{F'})$ are contained in the vertices of $P$.  Thus the fixed point set of the image of $n$ via $\iota_{F'}$
in $\bbR^n$ (which we are identifying with its dual) is $\phi_{F'}^{-1}$ of a facet in $\phi_{F'}(\M_{F'})$, i.e., $\phi_P^{-1}$ of a facet in $\phi_F'(\M_{F'})$, together with the pre-image of the vertices of $P$. The image of $n$ mentioned above is of course the normal to $F$ in $F'$. 

Now suppose that a vector $\theta$ is normal to more than one face.  The above arguments can be applied to each of these faces to obtain the last assertion.
Finally, suppose $M$ has an arbitrary toric metric on it compatible with the given complex structure, and thus associated with a different symplectic form $\omega$. The two symplectic structures are related via the commutative diagram
$$\begin{CD}
M  @>{\eta}>>  M_P\\
@V{\phi}VV      @VV{\phi_P}V \\
P    @>{\operatorname{id}}>> P
\end{CD}$$
with $\eta^*\omega_P=\omega$.  The function $\eta$ is $\bbT^n$-equivariant, i.e., $\eta(t.x)=t\eta(x)$ for all $t \in \bbT^n$.  Thus  
\begin{displaymath}
\eta(F_{M,\theta})=F_{M_P,\theta}.
\end{displaymath}
We determined the set $F_{M_P,\theta}$ and how it relates to the moment map $\phi_P$ in the preceding arguments, so the commutative diagram gives the desired result.
\end{proof}

To complete our discussion of the fixed point sets of the isometries $\psi(e^{i\theta})$, we give the relationship between the volume of a face in $P$ and the volume of its pre-image under the moment map.
\begin{lemma} \label{volumes_of_pre_images}
Consider a face $F$ of dimension $q$ in the Delzant polytope $P$ of a symplectic toric manifold $M$ endowed with a symplectic form $\omega$. Let $\phi$ be the moment map of the torus action with respect to the form $\omega$. Then
\begin{displaymath}
\text{vol}_\omega(\phi^{-1}(F))=(2\pi)^q\text{vol}(F).
\end{displaymath}
\end{lemma}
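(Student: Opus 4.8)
The plan is to reduce to the top-dimensional case $F=P$ and then apply the action--angle normal form. First I would observe that $N_F:=\phi^{-1}(F)$ is itself a symplectic toric manifold. Indeed, if $F$ lies in the facets with primitive outward normals $u_{i_1},\dots,u_{i_r}$ (so that $\dim F=q=n-r$), then, exactly as in the proof of Lemma \ref{fixed_point_set}, $N_F=\{[z_1,\dots,z_d]\in M: z_{i_1}=\dots=z_{i_r}=0\}$ is the fixed-point set of the closed subtorus $\bbT_F\subset\bbT^n$ whose Lie algebra is spanned by $u_{i_1},\dots,u_{i_r}$; hence $N_F$ is a compact connected symplectic submanifold of $(M,\omega)$ on which the quotient torus $\bbT^n/\bbT_F\cong\bbT^q$ acts effectively and in Hamiltonian fashion, with $\omega|_{N_F}$ as its symplectic form and with moment polytope $F$ (carrying the integral affine structure induced from $\bbZ^n$, which is the structure with respect to which $\text{vol}(F)$ is taken). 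Since $\text{vol}_\omega(\phi^{-1}(F))=\text{vol}_{\omega|_{N_F}}(N_F)$, it is enough to prove the identity when $F=P$, i.e.\ to show $\int_M \omega^n/n!=(2\pi)^n\,\text{vol}(P)$ for a $2n$-dimensional symplectic toric manifold with moment polytope $P$.

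For the top-dimensional case, let $U=\phi^{-1}(\operatorname{int}P)$. By the Delzant conditions the $\bbT^n$-action is free on $U$, and since $\operatorname{int}P$ is convex, the action--angle theorem (in its torus-action form) provides an equivariant symplectomorphism $U\cong\operatorname{int}(P)\times\bbT^n$ under which $\phi$ is the projection onto $\operatorname{int}P$ and $\omega=\sum_{i=1}^n dx_i\wedge d\theta_i$, where $x=(x_1,\dots,x_n)$ are the linear coordinates on $\operatorname{int}P\subset\bbR^n$ and $\theta_1,\dots,\theta_n$ are angle coordinates of period $2\pi$. Consequently $\omega^n/n!=dx_1\cdots dx_n\,d\theta_1\cdots d\theta_n$ on $U$.

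Finally, $M\setminus U=\phi^{-1}(\partial P)$ is a finite union of preimages of facets, each a real-codimension-$2$ submanifold and hence of Liouville measure zero, so
\[
\text{vol}_\omega(M)=\int_M\frac{\omega^n}{n!}=\int_U\frac{\omega^n}{n!}=\int_{\operatorname{int}P}\int_{\bbT^n}dx\,d\theta=(2\pi)^n\,\text{vol}(\operatorname{int}P)=(2\pi)^n\,\text{vol}(P).
\]
Combined with the reduction of the first paragraph this yields $\text{vol}_\omega(\phi^{-1}(F))=(2\pi)^q\,\text{vol}(F)$ for every face $F$. I expect the only delicate point to be pinning down the constant $(2\pi)^q$: this requires matching the convention for $\omega$ fixed in \S\ref{sec:toric_geom} (recall the moment-map component along $u_l$ is $|z_l|^2+\lambda_l$, so that near a vertex $\omega$ is $\sum d(|z_j|^2)\wedge d\theta_j$ on the $\bbC^n$ model), the period of the angle variables in the normal form, and the lattice normalization of $\text{vol}(F)$; these are most safely checked together on the local model $\bbC^n$ near a vertex. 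Everything else is the standard action--angle normal form plus a measure-zero argument; alternatively one can package the same computation through the Duistermaat--Heckman theorem, which identifies $\phi_*(\omega^n/n!)$ with $(2\pi)^n$ times Lebesgue measure on $P$.
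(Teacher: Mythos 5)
Your proof is correct and follows essentially the same route as the paper's: the paper also computes the volume via action--angle coordinates on the dense set $\mathring{P}\times\bbT^n$ and explicitly notes that one can reduce to the top-dimensional case because the pre-image of a face is itself toric with moment map the restriction of $\phi$. You simply supply more detail (the measure-zero argument on $\phi^{-1}(\partial P)$ and the normalization check), which the paper leaves implicit.
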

\begin{proof}
The key point is that there are symplectic coordinates on an open dense set of $M$. Namely, this dense set can be viewed as $\mathring{P} \times \bbT^n$, where $\mathring{P}$ is the interior of $P$.  Then $\phi$ determines coordinates $x$ on $\mathring{P}$, and there are coordinates $v$ on $\bbT^n$.  The volume of $M$ is given by
\begin{displaymath}
\int_{\mathring{P}\times \bbT^n}(dx\wedge dv)^n=\int_P(dx)^n\int_{\bbT^n}(dv)^n=(2\pi)^n\text{vol}(P).
\end{displaymath}
In the same way there are symplectic coordinates on an open dense subset of the pre-image of a $q$-dimensional face and one can argue as above. One can also use the fact that the pre-image of a face is itself toric with moment map the restriction of $\phi$. 
\end{proof}


\section{Heat invariants in a nutshell}\label{sec:heat}

Donnelly \cite{Don1} gave an asymptotic expansion of the heat trace in the presence of an isometry; we will use this expansion to obtain geometric information from the equivariant spectrum of a toric manifold.  
For general background on heat invariants, good references are \cite{BGM} and \cite{Chavelbook}.
We begin by recalling the setting in which we work.

Let $(M^{2n}, \omega)$ be a symplectic toric manifold equipped with a fixed effective $\bbT^n$-action and corresponding group homomorphism $\psi:\bbT^n\rightarrow Sympl(M)$, and with a toric metric $g$.  Fix $\theta \in \bbR^n$.  Then $\psi(e^{i\theta})$ is an isometry of $M$,  and for each eigenvalue $\lambda$ of the Laplacian on $(M,g), \psi(e^{i\theta})$ induces a representation on the eigenspace corresponding to $\lambda$ which we denote by $\psi^\sharp_\lambda(\theta)$.  Let $F_\theta$ denote the fixed point set of $\psi(e^{i\theta})$.  
For a fixed component $\Q$ in $F_\theta$ and a fixed point $a$ in this component, $\psi(e^{i\theta})$ induces an action $A: (T_a\Q)^\perp \rightarrow (T_a\Q)^\perp$; note that $(Id - A)$ is invertible, and let $B$ denote $(Id - A)^{-1}$.   

Using this notation, we can now state Donnelly's theorem as it applies to our setting.

\begin{thm}\cite{Don1}\label{thm:spectral_data}  
There is an asymptotic expansion as $t \downarrow 0$ given by 
\begin{equation}\label{eqn:don}
\sum_\lambda \text{tr} ( \psi^\sharp_\lambda(\theta))e^{-t\lambda}  \simeq 
\sum_{\Q \subset F_\theta} (4 \pi t)^{-q/2} \sum_{k=0}^{\infty} t^k \int_{\Q} b_k (\theta, a) \text{dvol}_{\Q} (a)
\end{equation}
where $q$ is the dimension of the component $\Q$ of $F_\theta$, $b_i(\theta,a) = |\det B| b_{i}'(\theta, a)$ and $b_i'(\theta,a)$ is an invariant polynomial in the components of $B$ and the curvature tensor $R$ of $M$ and its covariant derivatives.
\end{thm}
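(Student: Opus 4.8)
This is Donnelly's theorem \cite{Don1}; here is the strategy one would follow. Write $\gamma = \psi(e^{i\theta})$, an isometry of $(M^{2n},g)$, and let $K(t,x,y)$ denote the heat kernel of $(M,g)$, i.e.\ the Schwartz kernel of $e^{t\Delta_g}$. The first step is the spectral identity
\[
  \sum_\lambda \operatorname{tr}(\psi^\sharp_\lambda(\theta))\, e^{t\lambda} = \int_M K(t, \gamma x, x)\, \text{dvol}(x),
\]
which follows by expanding $K$ in an orthonormal basis of eigenfunctions and using that $\gamma$ commutes with $\Delta_g$, hence preserves each eigenspace; the diagonal integral of the pulled-back kernel over an eigenspace reads off the trace of the induced representation $\psi^\sharp_\lambda(\theta)$. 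Thus everything reduces to the small-$t$ asymptotics of this $\gamma$-equivariant heat trace.

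Next I would localize near the fixed-point set $F_\theta$. By the Gaussian off-diagonal bounds for $K$, the integrand $K(t,\gamma x,x)$ is $O(t^\infty)$ uniformly on $\{x : d(\gamma x, x) \ge c\}$ for any $c>0$, so only a tubular neighborhood of $F_\theta$ contributes modulo $O(t^\infty)$. Since $\gamma$ is an isometry, $F_\theta$ is a disjoint union of closed totally geodesic submanifolds $\Q$ of dimension $q_j$; choosing a partition of unity subordinate to disjoint tubular neighborhoods reduces the problem to a sum, over the components, of integrals over these neighborhoods.

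On the neighborhood of a fixed component I would use Fermi coordinates $(a,v)$ with $a\in\Q$ and $v$ in the normal space $(T_a\Q)^\perp$, together with the Minakshisundaram--Pleijel parametrix $K(t,x,y)\sim (4\pi t)^{-n} e^{-d(x,y)^2/(4t)}\sum_k t^k u_k(x,y)$. Because $\Q$ is totally geodesic and $\gamma$-fixed, $\gamma$ acts on the normal fibers by the orthogonal map $A=A(a)=d\gamma_a|_{(T_a\Q)^\perp}$; no eigenvalue equals $1$ (else the exponential of a fixed normal vector would enlarge $\Q$), so $B=(Id-A)^{-1}$ exists and $d(\gamma(a,v),(a,v))^2 = |(Id-A)v|^2 + O(|v|^3)$. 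One then substitutes the parametrix, Taylor-expands $u_k(\gamma(a,v),(a,v))$, the exponent, and the Fermi volume element in powers of $v$, and integrates out $v$ by Gaussian (Wick) calculus; the leading Gaussian integral contributes exactly $|\det(Id-A)|^{-1}=|\det B|$, each extra pair of powers of $v$ contributes one extra power of $t$, and the net result is an expansion of the shape $(4\pi t)^{-q_j/2}\sum_k t^k \int_{\Q} b_k(\theta,a)\,\text{dvol}_{\Q}(a)$ with $b_k = |\det B|\, b_k'$, where $b_k'$ is a universal polynomial in the entries of $B$, in the curvature tensor $R$ of $M$ at $a$, and in its covariant derivatives (the $u_k$, and the Taylor coefficients of $d(\cdot,\cdot)^2$ and of the volume element, all being of this type). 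Summing over the components yields \eqref{eqn:don}, and a standard Levi/Duhamel parametrix-patching argument upgrades the formal identity to a genuine asymptotic expansion with controlled remainders.

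The main obstacle is this third step: organizing the interlocking Taylor expansions --- of $d(\gamma x,x)^2$ beyond its quadratic part, of the heat coefficients $u_k$, and of the volume distortion in Fermi coordinates --- together with the Gaussian moment computations so that the powers of $t$ match up correctly, and then arguing, via the locality of the parametrix and classical invariant theory, that each resulting coefficient is genuinely of the claimed polynomial form rather than merely an arbitrary smooth density on $\Q$. The remainder estimate, while routine in spirit, also requires care because the parametrix expansion is valid only away from the cut locus of the normal exponential map.
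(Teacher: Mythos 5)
The paper offers no proof of this statement: it is quoted verbatim as Donnelly's theorem and attributed to \cite{Don1}. Your outline (equivariant heat trace as $\int_M K(t,\gamma x,x)\,\mathrm{dvol}$, localization to the fixed-point components, Fermi coordinates with the Minakshisundaram--Pleijel parametrix, and Gaussian integration over the normal fibers producing the $|\det B|$ factor and the $(4\pi t)^{-q_j/2}$ normalization) is a correct sketch of Donnelly's actual argument, so there is nothing to compare against within the paper itself.
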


Note that the equivariant spectrum determines the left side of \eqref{eqn:don}. Hence we seek to determine what the right side of \eqref{eqn:don} tells us about the toric geometry of our manifold.   For our purposes, we will only need to compute $b_0$; Donnelly showed that $b_0'(\theta,a) = 1$, hence our heat invariants will depend solely on $B$.  Computing the matrix $B$ and examining the coefficient corresponding to the leading term in the asymptotic expansion will allow us to prove the following proposition, which is key to the proof of Theorem \ref{equi_abreu's}.

\begin{prop}\label{prop:sd}
Let $(M^{2n}, g)$ be a toric manifold with a fixed torus action and corresponding group homomorphism $\psi:\bbT^n\rightarrow Sympl(M)$, where $g$ is a toric metric.   
For each non-generic $\theta \in \bbR^n$, the equivariant spectrum determines a volume and an (unsigned) normal vector corresponding to the face(s) of minimal codimension associated to $F_\theta$.  If there is a unique face of minimal codimension, this volume is precisely the volume of that face; else, the volume is the sum of the volumes of the parallel faces. 
\end{prop}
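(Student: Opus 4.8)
The plan is to extract everything from the leading term of Donnelly's expansion~\eqref{eqn:don}, which behaves like $(4\pi t)^{-q_{\max}/2}$ where $q_{\max}=\max_{\Q\in F_\theta} q_j$. By Lemma~\ref{fixed_point_set}, the components of $F_\theta$ of maximal dimension are exactly the pre-images under the moment map of the faces of \emph{minimal} codimension to which $\theta$ is normal (in a face of lower codimension); for a non-generic $\theta$ there is at least one such face of positive dimension, and $q_{\max}$ is twice the dimension of those faces (since pre-images of faces are themselves symplectic submanifolds). First I would observe that the equivariant spectrum, being the left-hand side of~\eqref{eqn:don}, determines the full asymptotic expansion on the right, hence in particular determines $q_{\max}$ and the coefficient of $t^{-q_{\max}/2}$, namely $(4\pi)^{-q_{\max}/2}\sum_{\Q}\int_\Q b_0(\theta,a)\,\mathrm{dvol}_\Q(a)$, where the sum runs over the top-dimensional components only.

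Next I would compute $b_0$ on such a top-dimensional component $\Q=\phi^{-1}(F)$. Since $b_0'(\theta,a)=1$ by Donnelly, $b_0(\theta,a)=|\det B|$ where $B=(\mathrm{Id}-A)^{-1}$ and $A$ is the linearized action of $\psi(e^{i\theta})$ on the normal space $(T_a\Q)^\perp$. The key computation is that on the normal bundle to $\phi^{-1}(F)$, the isometry $\psi(e^{i\theta})$ acts by a rotation whose angles are determined by $\theta$ paired against the normals $u_{i}$ of the facets cutting out $F$; crucially these angles are \emph{constant} along $\Q$ (they depend on $\theta$ and on the combinatorics of $F$, not on $a$), so $|\det B|$ is a constant $c(\theta,F)$ on $\Q$. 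Therefore the top coefficient equals $(4\pi)^{-q_{\max}/2}\sum_F c(\theta,F)\,\mathrm{vol}(\phi^{-1}(F))$, and by Lemma~\ref{volumes_of_pre_images} this is $(4\pi)^{-q_{\max}/2}\sum_F c(\theta,F)(2\pi)^{q_{\max}/2}\mathrm{vol}(F)$, the sum over the minimal-codimension faces normal to $\theta$. For the unsigned normal vector: a top-dimensional $F$ of codimension $r$ in $P$ determines the direction of $\theta$ up to sign among the facets through $F$, and one recovers this direction from the combinatorial data encoded by which $\theta$ produce the same $q_{\max}$ and related coefficients — more directly, $\theta$ itself is the (scaled) normal and we simply read off its direction, the ``unsigned'' caveat reflecting that $\theta$ and $-\theta$ give the same isometry and hence the same spectral contribution. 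When there is a unique such face $F$, the formula collapses to $c(\theta,F)^{-1}(4\pi/2\pi)^{q_{\max}/2}$ times the top coefficient being $\mathrm{vol}(F)$; when several parallel faces $F_1,\dots,F_m$ share the normal direction $\theta$ they all have the same codimension and the same constant $c(\theta,F_i)=c(\theta)$ (the rotation angles depend only on the pairing $\theta\cdot u_i$, identical for parallel facets), so the coefficient returns $c(\theta)\sum_i \mathrm{vol}(F_i)$, giving the sum of volumes as claimed.

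I would then argue that $c(\theta)$ (equivalently $|\det B|$) is itself a quantity the spectrum can be normalized against, or is a universal constant depending only on $\theta$ and the codimension $r$; in the cases we care about (generic polygons, $n=2$) $r=1$ and the rotation is a single angle $\alpha=\theta\cdot u$, so $|\det B| = |1-e^{i\alpha}|^{-2} = (2-2\cos\alpha)^{-1} = (4\sin^2(\alpha/2))^{-1}$, an explicit function of $\theta$. Since $\theta$ is part of the given data (it labels which weights we extract from the equivariant spectrum), $c(\theta)$ is known, and dividing it out of the leading coefficient yields exactly $\mathrm{vol}(F)$ or $\sum_i\mathrm{vol}(F_i)$.

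The main obstacle I anticipate is the computation of the normal action $A$ — specifically verifying that $\psi(e^{i\theta})$ acts on $(T_a\phi^{-1}(F))^\perp$ by a rotation with angles depending only on the pairings $\theta\cdot u_{i}$ and not on the point $a$ or the chosen toric metric. This requires unwinding the Delzant construction: near $\phi^{-1}(F)$ one uses the local normal form (symplectic slice theorem / the model $\phi^{-1}(F)\times \mathbb{C}^r$ with the residual torus acting linearly on the $\mathbb{C}^r$ factor with weights given by the $u_i$), together with the fact from the proof of Lemma~\ref{fixed_point_set} that the $\bbT^n$-equivariant diffeomorphism $\eta$ to the reduced model lets us reduce to the reduced metric, where the action is manifestly the standard linear one $[\,\dots,e^{it}z_l,\dots\,]$. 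Once that local picture is in hand, the determinant computation and the volume bookkeeping via Lemma~\ref{volumes_of_pre_images} are routine, and the parallel-faces case follows because parallel facets have proportional (indeed equal-direction) primitive normals, forcing identical rotation angles.
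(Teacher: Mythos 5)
Your overall strategy coincides with the paper's: isolate the leading term of Donnelly's expansion \eqref{eqn:don}, note that the top-dimensional components of $F_\theta$ are the moment-map pre-images of the minimal-codimension faces (Lemma \ref{fixed_point_set}), compute $b_0=|\det B|$ from the block-rotation form of the action on the normal bundle, and convert $\mathrm{vol}(\phi^{-1}(F))$ into $\mathrm{vol}(F)$ via Lemma \ref{volumes_of_pre_images}. The one step where you diverge is also the one with a genuine gap: how to remove the factor $\prod_{i=1}^q\bigl(2-2\cos(\alpha_i(\theta))\bigr)^{-1}$ from the leading coefficient. You assert that this constant is ``known'' because $\theta$ is part of the data, with rotation angles given by $\theta\cdot u_i$. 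That is not justified in general: the weights $\alpha_i(\theta)$ are not Euclidean pairings but the coefficients of $\theta$ in the basis of primitive outward normals at a vertex of $F$ (for $\theta=tu_l$ with $u_l$ primitive the angle is $t$, not $t|u_l|^2$), and for a face of codimension $q\geq 2$ these coefficients depend on the normals $u_{i_1},\dots,u_{i_q}$ of the facets containing $F$ --- precisely part of the data one is trying to recover. For $q=1$ your shortcut can be repaired (a positive-dimensional top component forces $\theta$ to be a real multiple of a primitive integer vector, which is determined up to sign by the direction of $\theta$, so the single angle is known up to sign and $2-2\cos$ is even), but the proposition is stated for all $n$ and all non-generic $\theta$.

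The paper's fix is a short scaling argument you should adopt: replace $\theta$ by $s\theta$ and use $\alpha_i(s\theta)=s\,\alpha_i(\theta)$, so that the equivariant spectrum yields the entire function $s\mapsto \mathrm{vol}(Q)/\prod_{i=1}^q\bigl(2-2\cos(s\,\alpha_i(\theta))\bigr)$ for all $s\in\bbR$; knowledge of this one-parameter family determines $\mathrm{vol}(Q)$ (and the multiset of weights up to sign) with no a priori knowledge of the $u_{i_j}$. With that substitution your argument matches the paper's. The remaining points you raise --- constancy of the rotation angles along the component, reduction to the reduced metric via the equivariant map $\eta$, and the equality of the $(2-2\cos)$ factors for a pair of parallel facets --- are all consistent with the paper's treatment (the first two are absorbed into Lemma \ref{fixed_point_set}; the paper simply writes down the block-diagonal form of $A$ without further comment).
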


\begin{rem}
The reader may want to keep the case $n=2$ in mind, as that will be the setting of our present application of this result.  For $n=2$ the proposition says that the equivariant spectrum determines the (unsigned) normal vectors to the edges of the Delzant polygon and the sum of the lengths of the edges corresponding to each normal vector.
\end{rem}

\begin{proof}
We use Theorem \ref{thm:spectral_data}.  In particular, we calculate $|\det B|$ in the case when $\theta \in \bbR^n$ is non-generic.  Let $F$ be a face of minimal codimension $q$ associated to $F_\theta$, say with normal $u_F$.  Let $Q$ be the pre-image under the moment map of $F$; the dimension of $Q$ is $2(n-q)$. Then the action $A$ on the fiber of the normal bundle over a point $a \in Q$ takes the form
\[
A = 
\begin{pmatrix}
A_1 & 0 & \cdots & 0  \\
0 & A_2 & \cdots & 0 \\
\vdots & \vdots & \ddots & \vdots\\
0 & 0 & \cdots & A_q
\end{pmatrix}
\]    
where each $A_i$ is of the form
\[
A_i = 
\begin{pmatrix}
\cos( \alpha_i (\theta)) & -\sin( \alpha_i (\theta))  \\
\sin( \alpha_i (\theta)) & \cos( \alpha_i (\theta)) \\
\end{pmatrix}.
\]    
Here $\alpha_i(\theta)$ is the weight of the action in the direction of $u_F$.  Thus 
\[
|\det B| = \frac{1}{\prod_{i=1}^q (2 - 2 \cos( \alpha_i(\theta)))}
\] 
and for $\theta \in \bbR^n$ equal to a multiple of $u_F$, the leading term in \eqref{eqn:don} is
\begin{equation}\label{eqn:leading}
(4 \pi t)^{-(n-q)} \frac{\text{vol}(Q)}{\prod_{i=1}^q (2 - 2 \cos( \alpha_i(\theta)))}.
\end{equation}
Note that the contributions of the other components of $F_\theta$ correspond to higher powers of $t$ than the power of $t$ in this term.  In particular, the pre-images of the vertices contribute to the constant term, so that the normal directions are ``hotter'' than the generic directions.  Also, the assumption that $F$ is a face of minimal codimension associated to $F_\theta$ implies that there will not be contributions to the leading term coming from the pre-images of other faces associated to $F_\theta$.   
Substituting $s\theta$ for $\theta$ in \eqref{eqn:leading} and noting that $\alpha_i(s\theta)=s\alpha_i(\theta)$ for all $i\in \{1,\ldots, q\}$, we know 
\begin{displaymath}
\frac{\text{vol}(Q)}{\prod_{i=1}^q (2 - 2 \cos(s \alpha_i(\theta)))}
\end{displaymath}
for all values of $s \in \bbR$.  Hence we know $\text{vol}(Q)$.   

As we have seen in Lemma \ref{volumes_of_pre_images}, one can relate the volume of $Q$ to the volumes of the faces in the polytope which correspond to it under the moment map.  If $F$ is the only face of minimal codimension $q$ associated to $F_\theta$, then $\text{vol}(Q)=(2\pi)^{n-q}\text{vol}(F)$ and we know the volume of $F$ exactly; else, we have this relationship for each face and its corresponding pre-image under the moment map and we know the sum of the volumes of the faces of minimal codimension which are normal to $u_F$.

We may also take $\theta \in \bbR^n$ equal to zero, so that $\psi(e^{i \theta})$ is the identity.  In this case we get the usual asymptotic expansion of the heat trace, and thus we obtain the usual heat invariants.  In particular the spectrum determines the volume of $M$, so by Lemma \ref{volumes_of_pre_images} we hear the volume of $P$.
\end{proof}

When $n=2$, we get additional information about our polygon from the spectrum of the real manifold $M_{\bbR}$ naturally associated to $M$.

\begin{prop}\label{number_facets}
Let the setup be as in Proposition \ref{prop:sd}. If $n=2$, the spectrum of $M_{\bbR}$ determines the number of vertices of $P$.
\end{prop}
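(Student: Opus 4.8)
The plan is to extract the number of vertices of $P$ from a single classical heat invariant of the real manifold $M_{\bbR}$, namely its volume (or equivalently the constant obtained from the leading term of its heat trace), combined with the combinatorial structure forced on $M_{\bbR}$ by Delzant's construction. First I would recall that, as in Definition \ref{defn:realmfld}, $M_{\bbR}$ is the fixed point set of the antiholomorphic involution $\sigma$ induced from complex conjugation on $\bbC^d$, and that in dimension $n=2$ it is a closed surface. The key structural observation is that $M_{\bbR}$ is a toric-type object built from the same polygon $P$: over the interior $\mathring P$ one has local coordinates $(x,v)$ with $v$ now ranging over the $2$-torsion subgroup of $\bbT^2$, so $M_{\bbR}$ is tiled by $2^2=4$ copies of $P$ glued along the pre-images of the facets, with the pre-images of the vertices as the points where four tiles meet. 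I would make this precise by describing $M_{\bbR}$ as the topological space obtained from $4$ copies of $P$ by the reflection identifications along facets, exactly the real locus of the Delzant construction.

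Next I would compute the Euler characteristic of $M_{\bbR}$ from this tiling. If $P$ is a Delzant polygon with $V$ vertices, then it has $V$ edges and $1$ face, so the $4$-fold cover tiling of $M_{\bbR}$ has (before identification) $4V$ vertices, $4V$ edges, $4$ faces; after the edge-gluing each facet pre-image is shared by two tiles and each vertex pre-image by four, giving $\chi(M_{\bbR}) = 4 - 2V + V = 4 - V$ after carrying out the count carefully. (The precise bookkeeping — how many vertices and edges survive the quotient — is a finite computation I would do explicitly, but the upshot is that $\chi(M_{\bbR})$ is an affine function of $V$.) Thus knowing $\chi(M_{\bbR})$ is equivalent to knowing $V$, the number of vertices of $P$.

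Then I would invoke the heat-invariant machinery: for a closed surface the first two heat coefficients of $\Delta_g$ on functions are $a_0 = \text{vol}(M_{\bbR})$ and $a_1 = \frac{1}{3}\int_{M_{\bbR}} K\, dA = \frac{1}{3}\cdot 2\pi\chi(M_{\bbR})$ by Gauss--Bonnet, and these are determined by $\text{Spec}(M_{\bbR})$. Hence $\text{Spec}(M_{\bbR})$ determines $\chi(M_{\bbR})$, and by the previous paragraph it determines $V$, the number of vertices of $P$. I would note that the metric on $M_{\bbR}$ is the restriction of the toric metric $g$, which plays no role beyond making $a_1$ meaningful; only the topological invariant $\chi$ is used.

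The main obstacle I expect is the combinatorial/topological bookkeeping of the tiling of $M_{\bbR}$: one must be careful that $M_{\bbR}$ is genuinely a closed surface (no boundary, since facet pre-images are interior folds and vertex pre-images are smooth points of the quotient by the Delzant condition), that the four copies of $P$ are glued along the correct group-translates, and that the count of surviving cells is done consistently. A secondary point to handle cleanly is whether $M_{\bbR}$ could be disconnected or non-orientable, which would change the relation between $\chi$ and the genus but not the fact that $\chi$ is an affine function of $V$; I would argue connectivity from the connectivity of the real locus in the Delzant construction and simply carry the Gauss--Bonnet argument with $\chi$ rather than genus throughout, so that orientability is irrelevant.
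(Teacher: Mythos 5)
Your proposal is correct, and the final step (Gauss--Bonnet plus the second heat coefficient $\frac{1}{3}\int_{M_{\bbR}}K\,dA = \frac{2\pi}{3}\chi(M_{\bbR})$ to read off $\chi$ from the spectrum) is exactly the paper's. Where you diverge is in how $\chi(M_{\bbR})$ is computed as a function of the number $d$ of vertices: the paper invokes Theorem \ref{corner_chopping} to realize $M$ (for $d\geq 5$) as a corner chopping of a Hirzebruch surface, hence as $\mathbb{CP}^2\#(d-3)\overline{\mathbb{CP}^2}$, so that $M_{\bbR}$ is a connected sum of $d-2$ copies of $\mathbb{RP}^2$ with $\chi = 4-d$, checking $d=3,4$ by hand; you instead compute $\chi(M_{\bbR})$ directly from the cell structure of the real locus in the Delzant construction ($4$ tiles, $2d$ edges, $d$ vertices, giving $4-2d+d = 4-d$). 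Your route is more elementary and self-contained --- it needs no classification of toric surfaces and in particular does not depend on the blow-up description, which in any case only pins down the diffeomorphism type up to the orientable/non-orientable ambiguity the paper sidesteps at $d=4$ --- at the cost of having to justify the tiling bookkeeping carefully (fiber of the real locus over a facet interior is $2$ points, over a vertex is $1$ point), which you correctly flag. The paper's route is shorter given that Theorem \ref{corner_chopping} is already available in \S 5. Your remark that connectivity and orientability are irrelevant is right, since the heat-invariant argument only sees $\int K$, and your aside about $a_0=\mathrm{vol}(M_{\bbR})$ is harmless but unused.
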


We postpone the proof of this proposition until \S \ref{sec:zoo}, after the necessary background has been motivated and explained.


\section{Constructing polygons}\label{sec:comb}

We now examine to what extent the geometric data provided by the equivariant and real spectra determine a Delzant polygon.  
In the two-dimensional case the data provided by these spectra as in Propositions \ref{prop:sd} and \ref{number_facets} reduces to
\begin{enumerate}\label{sd2}
 \item the number of edges;
 \item the set of (unsigned) normal vectors to the edges, denoted $\{u_1, \ldots, u_r\}$;
 \item the sums of the lengths of the edges with normal vector $u_i$, denoted $l_i$, for each $i\in\{1,\ldots,r\}$;
 \item the volume of the polygon.
\end{enumerate}
We will refer to this collection of data as $\mcD$.

Note that each (unsigned) normal determines a family of parallel lines, with a corresponding edge lying along one of the lines in the family.  Specifying the position of a vertex of an edge then determines the line.  If we also know the length of the edge, we know the edge vector up to sign.  The following lemma gives a procedure for constructing a convex polygon from a specified set of edge vectors.

\begin{lemma}\label{most_obtuse}
Given a set of $d$ vectors in $\bbR^2$ that are known to be the edges of a convex polygon, an arbitrary element of the set (label it $e_1$), and a signed normal to $e_1$ (labeled $u_1$), there is a unique convex polygon $P_{e_1, u_1}$ satisfying 
\begin{itemize}
\item $0\in P_{e_1, u_1}$;
\item $e_1 \in P_{e_1, u_1}$;
\item  $u_1$ points outward from $P_{e_1, u_1}$;
\item the $d$ edge vectors form the edges of $P_{e_1, u_1}$.
\end{itemize}
\end{lemma}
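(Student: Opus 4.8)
The plan is to build $P_{e_1,u_1}$ explicitly by laying the edge vectors end to end in the cyclic order in which they must appear around a convex polygon, and then to show that this construction is forced (hence unique). First I would observe that since $e_1,\ldots,e_d$ are the edges of some convex polygon, we have $\sum_{i=1}^d e_i = 0$; this is the only closure condition we need. For a convex polygon, as one traverses the boundary counterclockwise the edge vectors turn monotonically, so their directions, viewed as points of the circle $S^1$, must be encountered in counterclockwise angular order with no direction repeated in a way that breaks convexity (parallel edges pointing the same way cannot both occur; parallel edges pointing in opposite directions occur on opposite sides). So the first real step is: given the \emph{set} $\{e_1,\ldots,e_d\}$, there is a unique cyclic order $e_{\pi(1)},e_{\pi(2)},\ldots,e_{\pi(d)}$ in which the argument (angle) is strictly increasing mod $2\pi$; this cyclic order is an invariant of the set, independent of any labeling. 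The orientation convention $u_1$ ``points outward'' pins down whether we traverse clockwise or counterclockwise (i.e. whether $u_1$ is $e_1$ rotated by $+\pi/2$ or $-\pi/2$), which removes the last ambiguity.

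Next I would fix the starting point. We want $0\in P$ and $e_1$ to be (the vector along) an edge of $P$; the cleanest normalization is to declare that the edge realizing $e_1$ runs from the vertex $v_0 := 0$ to the vertex $v_1 := e_1$, and then define $v_k := e_1 + e_{\pi(2)} + \cdots + e_{\pi(k)}$ for the subsequent vertices, reading off the edges in the forced cyclic order starting from $e_1$. By the closure relation $\sum e_i = 0$ the walk returns to $v_0$, so we obtain a closed polygonal curve $\Gamma$. I would then check that $\Gamma$ is the boundary of a convex polygon: because consecutive edge directions turn monotonically in one direction by a total of $2\pi$, the curve is simple and convex — this is the standard fact that a closed polygonal curve whose exterior angles are all of one sign and sum to $2\pi$ bounds a convex region. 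Call that region $P_{e_1,u_1}$. By construction $0=v_0\in P_{e_1,u_1}$, $e_1$ is an edge, the list of edges read off in boundary order is $e_1,e_{\pi(2)},\ldots,e_{\pi(d)}$, and the orientation was chosen so that $u_1$ is the outward normal to the $e_1$-edge.

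Finally, uniqueness. Suppose $P'$ is any convex polygon meeting all four bullets. Since $P'$ is convex, its edges in counterclockwise boundary order must be exactly the cyclic sequence of the $e_i$ in increasing-angle order; the condition that $u_1$ points outward forces the traversal orientation, and the condition that $e_1$ is an edge forces the sequence to start (up to cyclic rotation) at $e_1$. Hence the edge sequence of $P'$ read from the $e_1$-edge is $e_1,e_{\pi(2)},\ldots,e_{\pi(d)}$, identical to that of $P_{e_1,u_1}$, so $P'$ is a translate of $P_{e_1,u_1}$. The requirements $0\in$ (as the tail of the $e_1$-edge, which is forced once we know $0$ lies on the boundary and $e_1$ is the edge emanating from it) and the explicit placement $v_0=0$ pin down the translate; hence $P' = P_{e_1,u_1}$.

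The step I expect to be the main obstacle is not any single computation but making the combinatorial bookkeeping airtight: one must argue carefully that ``convex polygon with prescribed edge set'' really does force a unique cyclic order of the edges (handling parallel edges, where two of the $e_i$ are antiparallel and must land on opposite sides, and being precise about what ``$0\in P$'' together with ``$e_1\in P$'' is allowed to mean — I would read ``$e_1\in P_{e_1,u_1}$'' as ``the segment from $0$ to $e_1$ is an edge of $P_{e_1,u_1}$'', since that is the only reading that makes the statement true and is clearly the intended one). Once that normalization is fixed, existence is the monotone-turning construction above and uniqueness is immediate. I would also remark that the hypothesis ``known to be the edges of a convex polygon'' is doing real work: it guarantees both $\sum e_i=0$ and that no direction is repeated obstructively, so that the increasing-angle cyclic order exists and is unambiguous.
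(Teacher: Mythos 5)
Your proof is correct, but it takes a genuinely different route from the paper. The paper formalizes the ``next edge'' selection as a greedy rule --- among the edge vectors lying in the half-plane determined by $e_1$ containing $-u_1$, the successor of $e_1$ is the one making the most obtuse angle with $e_1$ (i.e.\ minimizing $\langle e_1, e_b\rangle$) --- and proves this rule is forced by induction on $d$: one slides the edge $e_1$ inward along $-u_1$ until an adjacent edge collapses, reducing to a polygon with fewer edges, with a three-way case analysis according to which edge vanishes. You instead invoke the classical fact that the edge vectors of a convex polygon, read in boundary order, appear in strictly increasing angular order (no direction repeats, total turning $2\pi$), so the cyclic order is the angular sort of the $e_i$; the choice of $u_1$ fixes the traversal orientation and the normalization $v_0=0$, $v_1=e_1$ fixes the translate. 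Your argument is shorter and avoids the case analysis, and it makes uniqueness essentially immediate from the rigidity of the angular order; the paper's argument is more hands-on and produces the explicit ``most obtuse angle'' selection procedure as its output, which is the operational form the authors use when reconstructing polygons from the spectral data. The one point worth making fully airtight in your version --- and you flag it yourself --- is that ``edges of a convex polygon'' rules out two edge vectors with the same direction, so the angular sort is a strict cyclic order; with that noted, both existence (monotone turning by a total of $2\pi$ yields a simple convex closed curve) and uniqueness go through as you describe.
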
 
These conditions say that once $e_1$ and $u_1$ are chosen, there is only one ordering of the set of edge vectors that gives rise to a convex polygon.
\begin{proof}
We place the initial vertex of the polygon at $0$ and the second vertex at the terminal point of $e_1$.  We may then arrange the remaining $d-1$ edge vectors so that the $d$ vectors form a convex polygon; the edge vector based at the second vertex is labeled $e_2$, the edge vector based at the terminal point of $e_2$ is labeled $e_3$, and so on.  We want to show that this ordering of the edge vectors is unique.  To do so, we use the ``most obtuse'' angle idea: we determine all the edge vectors that are in the half-plane $\mathcal{H}$ determined by $e_1$ that contains $-u_1$, and draw them as a ``spray'' with initial point at the second vertex. We claim that $e_2$ is the edge that makes the most obtuse angle with the edge $e_1$, and thus is uniquely determined. We prove this by induction on the number of edges of the polygon. To be more precise, we want to prove 
\begin{equation}\label{most_obtuse_prop}
 \langle e_1,e_b \rangle \leq \langle e_1,e_2\rangle 
\end{equation}
for all $b \in I$, where $I$ is the set of indices corresponding to edges in $\mathcal{H}$. 

The base case for the induction is triangles. The statement holds in this case because there is a single edge that lies in the half-plane $\mathcal{H}$ determined by $e_1$ and $-u_1$; if both edges were in $\mathcal{H}$ then the triangle would not close.

Now assume the statement holds for all convex polygons with $d \geq 3$ edges and let $P$ be a polygon with one vertex at the origin and ordered list of edges $e_1, e_2, \ldots, e_{d+1}$. Move the edge $e_1$ in the direction of $-u_1$, and allow the length of $e_1$ to vary as necessary for the polygon to remain closed. Eventually the number of edges in the polygon will decrease. Let $P'$ denote the resulting polygon; note that the lengths of some of the edges may be different in $P'$ and $P$. One of the following cases occurs.

\noindent \textbf{Case 1:}  the edge $e_{d+1}$ vanishes

\begin{figure}[h]
\centering
\includegraphics[scale=0.9]{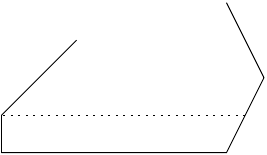}
\caption{The last edge vanishes}
\end{figure}

Let $\mathcal{H}'$ be the half-plane determined by $e_1'$ which contains $-u_1$.  Since $e_{d+1} \in (\mathcal{H'})^C$, the set $I'$ corresponding to $e_1' \in P'$ is the same as the set $I$ corresponding to $e_1 \in P$. By induction $P'$ satisfies property (\ref{most_obtuse_prop}).  The result follows.

\noindent \textbf{Case 2:} the edge $e_2$ vanishes

\begin{figure}[h]
\centering
\includegraphics[scale=0.9]{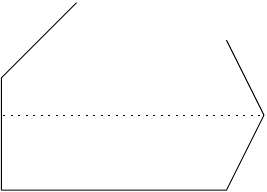}
\caption{The second edge vanishes}
\end{figure}

Since $e_1'$ is parallel to $e_1$, induction gives 
\begin{displaymath}
 \langle e_1,e_b \rangle <\langle e_1,e_3 \rangle
\end{displaymath}
for all $b \in I'$.  Moreover, 
$\langle e_1,e_3 \rangle <\langle e_1,e_2 \rangle$, for
otherwise the angle between $e_2$ and $e_3$ in $P$ would be greater than $\pi$, violating convexity. Therefore we must have
\begin{displaymath}
  \langle e_1,e_2 \rangle >\langle e_1,e_3 \rangle \geq \langle e_1, e_b \rangle
\end{displaymath}
for all $b$ in $I$. Hence property (\ref{most_obtuse_prop}) holds for $P$.

\noindent \textbf{Case 3:} both edges $e_{d+1}$ and $e_2$ vanish simultaneously  

If $d \geq 4$, then $P'$ has at least three edges and we can invoke the induction hypothesis; the arguments made in Cases 1 and 2 can be combined to show that property (\ref{most_obtuse_prop}) holds for $P$.  If $d=3$, then $P'$ consists of $e_1'$ collapsed onto $e_3$.  This implies that $e_1$ and $e_3$ are parallel; combining this with the fact that $e_4 \not \in \mathcal{H}$, we see that $I = \{2\}$ so that property (\ref{most_obtuse_prop}) is trivially satisfied.

Since the choice of $e_1$ was arbitrary, the above argument shows that the edge $e_{k+1}$ in the polytope is always uniquely determined as the edge in the appropriate half-plane making the most obtuse angle with $e_k$.  Thus the ordering of the edges in the polytope is unique.
\end{proof}

We cannot yet apply Lemma \ref{most_obtuse} directly to our data $\mcD$: we do not know the lengths of the edges exactly, and we only know the edge vectors up to sign.  One situation in which we \emph{do} know the lengths of the edges exactly is if there is one edge corresponding to each normal vector in the data $\mcD$.  This occurs when $P$ does not have parallel sides.  Since all Delzant polygons with four or more edges have at least one pair of parallel sides (cf. \S \ref{sec:zoo}), we know the lengths of the edges exactly for Delzant triangles.  In this case, we can easily dispatch with the problem of only knowing the edge vectors up to sign.  There is at least one choice of signs so that the resulting vectors are the edges of a Delzant triangle; we arbitrarily choose an initial vector in this triangle and label it $e_1$.  The subsequent vector is $e_2$ and the last edge vector is $e_3$.  Recalling that $e_1 + e_2 + e_3 = 0$, we see that changing the sign of exactly one or exactly two of our edge vectors corresponds to an edge vector being identically zero.  Thus the allowable sets of edge vectors for a Delzant triangle are $\{e_1,e_2,e_3\}$ and $\{-e_1,-e_2,-e_3\}$.  We will return to the special case of Delzant triangles shortly, but first discuss the more general situation.
  
Suppose that the only allowable sets of edge vectors for a Delzant polygon are $\{e_1, \ldots, e_d\}$ and $\{-e_1, \ldots, -e_d\}$. The number of such polygons with data $\mcD$ is $4d$, since we can
\begin{enumerate}
\item choose the initial vector of the polygon;
\item choose the sign of that vector;
\item choose the sign of the normal to that vector.
\end{enumerate}
We will see that all these choices give rise to polygons which are translates of $P_{e_1,u_1}$ or $P_{e_1,-u_1}$.  

\begin{lemma}\label{4dto2}
Fix the spectral data $\mcD$ and suppose that the only allowable sets of edge vectors corresponding to $\mcD$ are $\{e_1, \ldots, e_d\}$ and $\{-e_1, \ldots, -e_d\}$.  Then every convex polygon corresponding to  data $\mcD$ and with these edges is a translate of $P_{e_1,u_1}$ or $P_{e_1,-u_1}$.  
\end{lemma}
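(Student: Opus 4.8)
The plan is to deduce Lemma \ref{4dto2} directly from the uniqueness statement in Lemma \ref{most_obtuse}, using one elementary observation about convex polygons: a convex polygon, with a chosen sense of traversal of its boundary, is determined up to translation by the multiset of its edge vectors (the counterclockwise order must be the order of increasing argument, and the vertices are then the partial sums), and reversing the sense of traversal simply replaces every edge vector by its negative. In particular, the convex polygons whose edge--vector multiset, read in one sense or the other, equals $\{e_1,\dots,e_d\}$ form exactly two translation classes (possibly coinciding), and Lemma \ref{most_obtuse} identifies these two with $P_{e_1,u_1}$ and $P_{e_1,-u_1}$ -- the two being distinguished by which of $\pm u_1$ is the outward normal along the edge carried by $e_1$.

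With this in hand I would argue as follows. Let $Q$ be any convex polygon corresponding to $\mcD$ whose edge list is $e_1,\dots,e_d$ or $-e_1,\dots,-e_d$. In the first case $e_1$ is an edge vector of $Q$ in the given sense of traversal; in the second case $e_1$ is an edge vector of $Q$ when its boundary is traversed in the opposite sense, since reversing the traversal carries $-e_1,\dots,-e_d$ to $e_1,\dots,e_d$. Either way, translate $Q$ so that the (unique) edge carrying the vector $e_1$ runs from the vertex $0$ to the vertex $e_1$, and let $v\in\{u_1,-u_1\}$ be the outward normal of $Q$ along that edge (it is normal to $e_1$, hence one of $\pm u_1$). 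Then the translated $Q$ satisfies every condition defining $P_{e_1,v}$ in Lemma \ref{most_obtuse}: it is convex, it contains $0$ and $e_1$ as the endpoints of an edge, $v$ points outward there, and its edge vectors are $e_1,\dots,e_d$. By the uniqueness asserted in Lemma \ref{most_obtuse}, the translated $Q$ equals $P_{e_1,v}$, so $Q$ is a translate of $P_{e_1,u_1}$ or of $P_{e_1,-u_1}$. This also accounts for the count preceding the lemma: choosing the initial vector, its sign (which, by hypothesis, pins down the whole list), and the sign of its normal are exactly the choices fed into the above argument, and they collapse onto the two polygons $P_{e_1,\pm u_1}$; changing the initial vector while keeping the rest consistent just re-bases the polygon at a different vertex and hence produces a genuine translate.

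The points that require care are the two ingredients of the elementary observation: that a convex polygon is rigid up to translation once its cyclic sequence of edge vectors is prescribed, and -- the genuine content -- that passing from the family $-e_1,\dots,-e_d$ to the family $e_1,\dots,e_d$ is nothing more than reversing the orientation of the boundary, so that no third translation class can appear and the two normal--sign choices reproduce exactly $P_{e_1,u_1}$ and $P_{e_1,-u_1}$ rather than something new. I expect this orientation bookkeeping to be the main (and essentially the only) obstacle; once it is pinned down, the statement is an immediate consequence of Lemma \ref{most_obtuse}.
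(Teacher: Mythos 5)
Your proof is correct, and it rests on the same foundation as the paper's --- the uniqueness assertion of Lemma \ref{most_obtuse} --- but you deploy it differently. The paper enumerates the $4d$ candidate polygons $P_{\pm e_l,\pm u_l}$ explicitly as convex hulls of partial sums of the edge vectors and uses the relation $e_1+\cdots+e_d=0$ to exhibit, case by case, a concrete translation carrying each candidate onto $P_{e_1,u_1}$ or $P_{e_1,-u_1}$ (e.g.\ $P_{e_2,u_2}+e_1=P_{e_1,u_1}$). You instead take an arbitrary convex polygon $Q$ with data $\mcD$, reverse the boundary orientation if its edge list is $-e_1,\ldots,-e_d$ so that $e_1$ becomes one of its edge vectors, translate so that this edge runs from $0$ to $e_1$, and observe that the translated $Q$ then satisfies all the defining conditions of $P_{e_1,v}$ with $v\in\{u_1,-u_1\}$ the outward normal along that edge; uniqueness in Lemma \ref{most_obtuse} finishes the argument. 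Your route is shorter, avoids the partial-sum bookkeeping, and addresses the quantifier ``every convex polygon'' head-on, whereas the paper's enumeration tacitly relies on the same uniqueness to know that every polygon with these edges occurs among the $4d$ candidates; what the paper's computation buys in exchange is the explicit translation vectors, which are not needed for the statement. The point you single out as delicate --- that negating all edge vectors is exactly a reversal of the sense of traversal, so no third translation class can arise --- is indeed the crux, and it is handled implicitly in the paper by the reversed indexing in $P_{-e_1,u_1}=\mathrm{hull}(0,-e_1,-e_1-e_d,\ldots,-e_1-e_d-\cdots-e_3)$.
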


\begin{proof}
We have
\begin{displaymath}
P_{e_1,u_1}=\text{hull}(0, e_1,e_1+e_2,\ldots,e_1+\cdots+e_{d-1}).
\end{displaymath}
It is easy to check that
\begin{displaymath}
P_{e_2,u_2}=\text{hull}(0, e_2,\ldots,e_2+\cdots+e_{d}),
\end{displaymath}
since such a polygon satisfies all the conditions in Lemma \ref{most_obtuse}.  Note that
\begin{displaymath}
\text{hull}(0, e_2,\ldots,e_2+\cdots+e_{d})+e_1=\text{hull}(0, e_1,e_1+e_2,\ldots,e_1+\cdots+e_{d-1}),
\end{displaymath}
since $e_1+\cdots+e_{d}=0$. Therefore $P_{e_2,u_2}$ is a translate of $P_{e_1,u_1}$ as claimed. The same argument shows that $P_{e_l,u_l}$ is a translate of $P_{e_1,u_1}$ for any $l$. 

Next consider what happens when we change the sign of the initial vector.  We have
\begin{displaymath}
P_{-e_1,u_1}=\text{hull}(0, -e_1,-e_1-e_d,\ldots,-e_1-e_{d}-\cdots-e_{3})
\end{displaymath}
since, as one can check, the above hull satisfies all the properties in Lemma \ref{most_obtuse}.  Furthermore,  
\begin{eqnarray*}
\text{hull}(0, -e_1,-e_1-e_d, \ldots,-e_1-e_{d}-\cdots-e_{3})+(-e_2) \hspace*{2cm} \\
=  \text{hull}(e_3 + \cdots + e_d + e_1, e_3 + \cdots + e_d, e_3 + \cdots + e_{d-1}, \ldots, 0) \\
= \text{hull}(0,e_3, \ldots, e_3 + \cdots + e_d, e_3 + \cdots + e_d + e_1 ); \hspace*{1.85cm}
\end{eqnarray*}
this follows from suitably manipulating $e_1 + \cdots + e_d = 0$ to get expressions like $e_3 = -e_1 - e_{d}-e_{d-1}-\cdots-e_{4}-e_2$.  Therefore $P_{-e_1,u_1}$ is a translate of $P_{e_3,u_3}$ and hence of $P_{e_1,u_1}$; again, the same argument shows that $P_{-e_l,u_l}$ is a translate of $P_{e_1,u_1}$ for any $l$.  

Straightforward modifications of the above arguments show that $P_{e_l,-u_l}$ and $P_{-e_l,-u_l}$ are translates of $P_{e_1,-u_1}$ for any $l$.  Thus, up to translation, $P_{e_1,u_1}$ and $P_{e_1,-u_1}$ are the only two convex polygons corresponding to $\mcD$ with edges $e_1, \ldots, e_d$ or $-e_1, \ldots, -e_d$.
\end{proof}

The following proposition now follows immediately.

\begin{prop}\label{prop:triangles}
Let $P$ be a Delzant triangle which corresponds to a fixed set of data $\mcD$.  Then, up to translation, there are exactly two possibilities for $P$ (see Figure \ref{fig:twoposs}).
\end{prop}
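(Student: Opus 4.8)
The plan is to treat Proposition \ref{prop:triangles} as an immediate corollary of Lemma \ref{4dto2}, the only work being to check that a Delzant triangle supplies exactly the input that lemma requires. First I would unpack what the data $\mcD$ records in the triangle case: a triangle has three sides and no two of them are parallel, so the three unsigned normal directions $u_1, u_2, u_3$ occurring in $\mcD$ are distinct, each realized by a single edge. Consequently the numbers $l_1, l_2, l_3$ in $\mcD$ are the three individual edge lengths, and knowing $u_i$ together with $l_i$ determines the corresponding edge vector $e_i$ up to sign. Thus $\mcD$ determines the unordered triple of edge vectors of any Delzant triangle realizing it, up to an independent sign on each $e_i$.

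Next I would pin down which sign choices are actually possible. Fix a genuine labelling $e_1, e_2, e_3$ of the edge vectors of $P$, so that $e_1 + e_2 + e_3 = 0$. If some sign vector $(\varepsilon_1,\varepsilon_2,\varepsilon_3)\in\{\pm 1\}^3$ with $(\varepsilon_1,\varepsilon_2,\varepsilon_3)\neq\pm(1,1,1)$ also closed up, i.e. $\varepsilon_1 e_1 + \varepsilon_2 e_2 + \varepsilon_3 e_3 = 0$, then adding or subtracting the relation $e_1 + e_2 + e_3 = 0$ forces $e_i = 0$ for some $i$ (after a possible global sign flip we may assume $\varepsilon_1 = 1$, and each of the three remaining sign patterns kills one of the $e_i$), contradicting the positivity of the edge lengths. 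Hence the only lists of edges compatible with $\mcD$ are $e_1, e_2, e_3$ and $-e_1, -e_2, -e_3$, which is precisely the hypothesis of Lemma \ref{4dto2} with $d = 3$.

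Applying Lemma \ref{4dto2} then gives that any convex polygon with data $\mcD$ and one of these two edge lists --- in particular $P$ itself --- is a translate of $P_{e_1,u_1}$ or of $P_{e_1,-u_1}$, so there are at most two possibilities up to translation. To see that there are \emph{exactly} two, I would note that these two polygons are, up to translation, $P$ and $-P$ (the list $-e_1,-e_2,-e_3$ produces $-P$, since $v\mapsto -v$ preserves orientation in $\bbR^2$ and hence traverses the boundary in the same cyclic order; equivalently, these are the polygons of the two $\bbT^2$-actions in Example \ref{exa:CP2}). A triangle has an odd number of edges and hence no centre of symmetry, so $P$ is never a translate of $-P$, and the two possibilities are distinct.

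I do not anticipate a genuine obstacle; the statement really is an immediate consequence of Lemma \ref{4dto2}. The one place a careless argument could slip is the first step --- one must use that a triangle has no pair of parallel sides in order to conclude that $\mcD$ recovers the edge \emph{lengths}, not merely the normal directions, since this is exactly what licenses the application of Lemma \ref{4dto2} (which assumes the edge vectors are known up to sign). The sign bookkeeping in the second step is the only computation, and it is elementary.
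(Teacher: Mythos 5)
Your proposal is correct and follows essentially the same route as the paper: the authors likewise observe that a triangle has no parallel sides (so $\mcD$ gives the exact edge lengths), use $e_1+e_2+e_3=0$ to rule out all sign patterns other than $\pm(e_1,e_2,e_3)$, and then invoke Lemma \ref{4dto2}. Your closing remark that the two candidates are $P$ and $-P$ and hence genuinely distinct is a small addition the paper leaves implicit, but it does not change the argument.
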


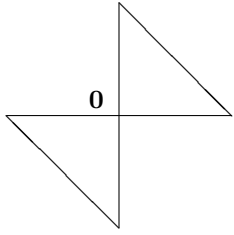
\begin{figure}[h]
\centering
\begin{center}
\setlength{\unitlength}{5mm}
\begin{picture}(11,7)(0,0)
\put(5.5,3){\line(1,0){3}}
\put(5.5,3){\line(0,1){3}}
\put(5.5,6){\line(1,-1){3}}
\put(4.7,3.2){$\mathbf{0}$}
\put(5.5,3){\line(-1,0){3}}
\put(5.5,3){\line(0,-1){3}}
\put(5.5,0){\line(-1,1){3}}
\end{picture}
\end{center}
\caption{Two possibilities for the Delzant triangle $P$}
\label{fig:twoposs}
\end{figure}

We next examine Delzant polygons with one or two pairs of parallel sides.  Note that given the spectral data $\mcD$, we immediately know when the corresponding Delzant polygon $P$ is a rectangle: we hear that there are four vertices and only two normal directions, which means that $P$ consists of exactly two pairs of parallel sides and each edge which is normal to $u_i$ has length $\frac{1}{2}l_i$.  It is not difficult to see that we are again in the setting of Lemma \ref{4dto2}, and there are two possibilities for $P$, up to translation.  For more general Delzant polygons with parallel sides, the arguments are more complicated. 

\begin{prop}\label{2pairs}
 Let $P$ be a generic Delzant polygon in $\bbR^2$ with no more than two pairs of parallel sides.  Then, up to translation, the data $\mcD$ for $P$ determine $P$ up to two choices.
\end{prop}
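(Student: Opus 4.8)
The plan is to reduce Proposition \ref{2pairs} to the situation already handled in Lemma \ref{4dto2}, namely the case in which the only allowable lists of edge vectors compatible with the data $\mcD$ are $e_1,\ldots,e_d$ and $-e_1,\ldots,-e_d$. Recall that $\mcD$ gives us the unsigned normal directions $u_1,\ldots,u_r$ and, for each $i$, the total edge-length $l_i$ of edges normal to $u_i$. When $P$ has no parallel sides we know each individual edge vector up to sign, so the only freedom is a global sign choice; when $P$ has exactly one or two pairs of parallel sides, the extra subtlety is that $l_i$ is a \emph{sum} of two lengths, so a priori we do not know how $l_i$ splits between the two parallel edges, and we must rule out the ``wrong'' splittings using convexity, the Delzant condition, and the known vertex count and area.

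First I would dispose of the degenerate cases: if $r=2$ then $P$ is a rectangle (four vertices, two normal directions) and each parallel edge has length $\tfrac12 l_i$, so the edge vectors are determined up to a global sign and Lemma \ref{4dto2} applies directly, giving two choices up to translation. If $P$ has no parallel sides at all, then $r=d$, each $l_i$ is the length of a single edge, the edge vectors are determined up to sign, and again Lemma \ref{4dto2} applies. So assume $P$ has exactly one or two pairs of parallel sides and at least one non-parallel side (genericity is used to exclude further coincidences among the $u_i$ and among the edge lengths, and in particular to guarantee that ``parallel'' edges are genuinely antiparallel edge \emph{vectors}, $e$ and a positive multiple of $-e$, rather than repeated directions arising some other way). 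For a pair of parallel sides normal to $u_i$, write the two edge vectors as $\lambda e$ and $-\mu e$ with $\lambda,\mu>0$ and $\lambda+\mu$ determined by $l_i$ and the (known, up to sign) primitive direction along $u_i^\perp$; the unknown is the ratio, equivalently the pair $(\lambda,\mu)$.

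The heart of the argument is then: given the multiset of normal directions with multiplicities (one direction per edge, counting a parallel pair as the same direction appearing twice) and the total lengths $l_i$, together with the vertex count and the area, the individual lengths $\lambda,\mu$ of each parallel pair are uniquely determined. I would argue this as follows. By Lemma \ref{most_obtuse}, once we fix an initial edge $e_1$ and the sign of $u_1$, the \emph{cyclic order} of the edge vectors around the polygon is forced by the ``most obtuse angle'' rule — and crucially this order depends only on the directions (and signs) of the edge vectors, not on their lengths, provided all lengths are positive. Hence the combinatorial type of $P$ (which edge sits between which) is determined by $\mcD$ up to the global choices enumerated before Lemma \ref{4dto2}. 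Now impose closure: $\sum_j e_j = 0$. Projecting this relation onto a direction orthogonal to one member of a parallel pair kills that pair and yields a linear relation among the remaining edge vectors; since for a polygon with only one or two parallel pairs the non-parallel edges have pairwise distinct directions, these closure relations, combined with the known total lengths $l_i$, form a linear system for the unknown split(s) $(\lambda_i,\mu_i)$. With one parallel pair there is one scalar unknown and the single scalar closure equation (the component of $\sum e_j=0$ along $u_i^\perp$ is automatically the $l_i$ equation, so one uses a transverse component, or more simply: the non-parallel edges are rigid once their signs are fixed, so closure pins down $\lambda_i-\mu_i$ and hence $\lambda_i,\mu_i$). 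With two parallel pairs one gets a $2\times 2$ system whose determinant is nonzero precisely because the two normal directions $u_i,u_j$ are independent — this is where we use that there are at most two pairs and that they are not themselves parallel to each other (genericity). Having pinned down all edge lengths, we are exactly in the hypothesis of Lemma \ref{4dto2} (all edge vectors known up to a global sign), which gives $P_{e_1,u_1}$ and $P_{e_1,-u_1}$ as the only two polygons up to translation. Finally one checks that the area datum and the Delzant/convexity conditions are consistent with (indeed automatically satisfied by) these two solutions, so no further pruning is needed; the area could alternatively be invoked as a sanity check or to handle any borderline genericity case.

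The main obstacle I anticipate is making precise and airtight the claim that the cyclic order of edges is independent of edge lengths and then showing the resulting closure system for the parallel-pair splittings is nondegenerate in exactly the stated range (at most two pairs, generic). One has to be careful that ``generic, no more than two pairs of parallel sides'' is exactly the hypothesis that (i) excludes three or more antiparallel pairs, which would make the closure system underdetermined, and (ii) excludes coincidences that would let two different splittings both yield valid Delzant polygons with the same $\mcD$. I would expect the write-up to spend most of its effort pinning down this combinatorial rigidity and the linear-algebra nondegeneracy, after which the appeal to Lemma \ref{4dto2} is immediate; the ``two choices'' in the conclusion are precisely the global orientation ambiguity $u_1 \leftrightarrow -u_1$, matching Proposition \ref{prop:triangles}.
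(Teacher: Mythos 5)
Your overall strategy coincides with the paper's: resolve the indeterminacies in $\mcD$ until the only allowable edge lists are $e_1,\ldots,e_d$ and $-e_1,\ldots,-e_d$, then invoke Lemma \ref{4dto2}. Your treatment of the length-splitting is essentially the paper's argument: the closure relation $\sum_j e_j=0$, projected suitably, pins down the splits because the two doubled normal directions are linearly independent (the paper phrases this as $\alpha_1 e_1+\alpha_2 e_2=0$ forcing $\alpha_1=\alpha_2=0$). That part is fine.

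The gap is that your ``heart of the argument'' takes as \emph{given} ``the multiset of normal directions with multiplicities (one direction per edge, counting a parallel pair as the same direction appearing twice).'' This is not part of $\mcD$: the data give you $d$ (the number of edges), the $r<d$ unsigned normal directions, and the total length $l_i$ per direction, but not \emph{which} $d-r$ of the directions are doubled. Identifying the repeated normals is one of the three indeterminacies the paper must resolve, and it is not automatic; a priori a different assignment of the doubled normals could close up into another valid Delzant polygon with the same $\mcD$. The same issue afflicts your claim that knowing each edge vector only up to sign leaves ``only a global sign choice'': a non-global sign flip is consistent with closure exactly when the flipped subset itself sums to zero (a ``subpolygon''), and this must be excluded, not assumed. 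The paper handles both points by \emph{defining} generic to mean precisely that $P$ has a unique choice of repeated normals and contains no subpolygons, and then justifying that this is a reasonable hypothesis via an explicit perturbation argument: each bad configuration is encoded by a linear relation of the form $a_1e_1+a_2e_2+\alpha e_3+\beta e_4+\sum_{i\in I}e_i=0$, and perturbing one $\lambda_i$ destroys all finitely many such relations without violating the Delzant condition. You flag these issues in your final paragraph as things ``genericity'' should exclude, but without specifying the genericity condition or giving the perturbation argument the proof is incomplete: as written, the reduction to Lemma \ref{4dto2} rests on unproven assumptions about what $\mcD$ determines.
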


\begin{proof}
For $P$ with one or two pairs of parallel sides the following indeterminants arise:
\begin{enumerate}
 \item We do not know the individual lengths of the edges in a parallel pair, only the sum of their lengths.
\item We do not know which of the vectors in our set of normal vectors are repeated, i.e., which of the normal vectors are normal to two edges.
\item We only know the edges up to sign.
\end{enumerate}

We begin by addressing the first issue. 
Let $\{e_1,\ldots,e_d\}$ be the set of edge vectors of the polygon. Without loss of generality, we may assume that $e_1$ has a parallel side; in the case of two pairs of parallel sides, assume for notational simplicity that $e_1$ and $e_2$ have parallel sides.  Let $e_{i1}$, and $e_{i2}$ if necessary, be the edge(s) parallel to $e_1$ (and $e_2$). We have
\begin{equation}\label{sum_edges_old}
e_1+e_2+\cdots+e_{i1}+\cdots+e_{i2}+\cdots+e_d=0.
\end{equation}
Suppose there is another polygon with the same spectral data $\mcD$ and same edge vectors, but the lengths of the individual edges in a parallel pair are different. The new edges in this polygon are $(1+\alpha_1)e_1$ and $e_{i1}+\beta_1e_1$ (and $(1+\alpha_2)e_2$ and $e_{i2}+\beta_2e_2$, if necessary).  All other edges remain the same. So we must have
\begin{displaymath}
(1+\alpha_1)e_1+(1+\alpha_2)e_2+\cdots+e_{i1}+\beta_1e_1+\cdots+e_{i2}+\beta_2e_2+\cdots+e_d=0.
\end{displaymath}
But the spectral data tells us that the sum of the lengths of two parallel sides must be the same in the new polygon as in the original. In the new polygon, one such sum is 
\begin{displaymath}
|(1+\alpha_1)e_1-e_{i1}-\beta_1e_1|=|e_1-e_{i1}|+|\alpha_1-\beta_1||e_1|,
\end{displaymath}
and in the original the corresponding sum is $|e_1-e_{i1}|$; thus $\alpha_1=\beta_1$.  In the case of two pairs of parallel sides, we would also obtain $\alpha_2=\beta_2$. In fact we have
\begin{equation}\label{sum_edges_new}
(1+\alpha_1)e_1+(1+\alpha_2)e_2+\cdots+e_{i1}+\alpha_1e_1+\cdots+e_{i2}+\alpha_2e_2+\cdots+e_d=0.
\end{equation}
Subtracting equation (\ref{sum_edges_new}) from equation (\ref{sum_edges_old}) leads to
\begin{displaymath}
\alpha_1e_1+\alpha_2e_2=0,
\end{displaymath}
which implies that $\alpha_1 = \alpha_2 = 0$ since $\{e_1, e_2\}$ is a basis for $\bbR^2$.

Now we address the second issue. Suppose we are given a set of spectral data $\mcD$. 
It is not necessarily the case that there is only one choice of repeated normals which corresponds to a valid polygon $P$.  However, we will show that there is another valid polygon arbitrarily close to $P$ with the same number of repeated normals as $P$ for which there \emph{is} a unique choice of repeated normals. 

Without loss of generality, we may assume that $P$ is such that the normal to $e_1$, say $u_1$, is repeated; in the case of two pairs of parallel sides, suppose $u_2$ is also repeated.  Using the same notational conventions as above, we have that equation (\ref{sum_edges_old}) holds.  If there is another polygon $P'$ with the same spectral data as $P$ and different repeated normals we will assume for notational convenience that the new repeated normals are $u_3$ and $u_4$. Since the length of $e_1' \in P'$ must equal the sum of the lengths of $e_1$ and $e_{i1}$, we have $e_1' =  \pm(e_1-e_{i1})$ (and $e_2' = \pm (e_2-e_{i2}$)). Also, $e_3' = \alpha e_3$ and $e_{i3}' = (\alpha-1) e_3$ since $|e_3' -e_{i3}'| = |e_3|$ (and $e_4' = \beta e_4$, $e_{i4}' = (\beta-1)e_4$).  We have 
\begin{equation}\label{sum_edges_new_2}
\pm (e_1-e_{i1}) \pm(e_2-e_{i2})+\alpha e_3+\beta e_4 \pm e_5 \pm \cdots + (\alpha-1) e_3 \pm \cdots + (\beta-1) e_4 \pm\cdots \pm e_d=0,
\end{equation}
where the $\pm$ signs come from the fact that we only know the edges of $P'$ up to sign.  Adding equations (\ref{sum_edges_old}) and (\ref{sum_edges_new_2}) gives
\begin{equation}\label{non_generic}
a_1e_1+a_2e_2+ \alpha e_{3}+ \beta e_{4}+\sum_{i\in I}e_i=0,
\end{equation}
where $a_i \in \bbR$ account for $e_{i1}$ and $e_{i2}$ as necessary and $I$ is a certain subset of indices in $\{5,\ldots,d\}$.  Namely, $I$ consists of the indices on the edges which have the same sign in $P$ and $P'$.  We can assume that $I \subsetneq \{5,\ldots,d\}$ by using the negative of equation (\ref{sum_edges_old}) to replace the sum in (\ref{non_generic}) by 
\begin{equation}\label{bigI}
a_1'e_1 + a_2'e_2 + (\alpha-1) e_{3}+ (\beta-1) e_{4} = 0
\end{equation}
if $I$ is as large as possible.

Now choose $e_i$ in equation \eqref{non_generic} or \eqref{bigI} such that $e_{i+1}$ is not in the sum and such that $e_{i-1}$ and $e_{i+1}$ are not parallel; this is always possible since we have treated the case of rectangles separately. Perturb the $\lambda_i$ corresponding to $e_i$ in $P$ by a small amount. This will modify the relevant sum by $c_ie_i+c_{i-1}e_{i-1}$, and we can choose the perturbation so that the sum is no longer zero. Note that we will have to do this for each possible choice of repeated normal, but there are only finitely many such choices; moreover, we may choose the change in $\lambda_i$ sufficiently small that a set of edges whose sum was nonzero keeps a nonzero sum under the perturbation.  So we can ensure that our perturbed polygon satisfies none of the relevant equations (\ref{non_generic}) or \eqref{bigI}.  Since varying $\lambda_{i}$ does not interfere with the Delzant condition, we have found a valid polygon arbitrarily close to $P$ with the same number of repeated normals as $P$ and with a unique choice of these normals.

Note that this argument addressing repeated normals can also be used to deal with the sign ambiguity for the edges.  For the convenience of the reader, we make this more explicit.  We say that $P$ has a subpolygon if there exists a proper subset of $\{1, \ldots, d\}$, say $\{i_1, \ldots, i_k\}$ with $k \geq 3$, such that $e_{i_1}+ \cdots+ e_{i_k}=0$; we also require that the complement of $\{i_1, \ldots, i_k\}$ in $\{1, \ldots, d\}$ contains at least three elements.  The subset of vectors with changed signs gives rise to a closed polygon, and these vectors must be non-consecutive in our larger polygon.  We may again encode this behavior in an equation like \eqref{non_generic}, choose an $e_i$ as above, and perturb the $\lambda_i$ corresponding to $e_i$ so that the sum is no longer zero.  By varying $\lambda_i$, we decrease the number of closed subpolygons.  
 
Thus for a Delzant polygon $P$ with no more than two pairs of parallel sides, ``generic'' will mean that $P$ does not contain subpolygons and has a unique choice of repeated normals.  Lemma \ref{4dto2} then implies that there are two choices for $P$, up to translation.
\end{proof}

Finally, we examine the case when $P$ has three pairs of parallel sides. It is only at this stage that we use that the volume of $P$ is determined by $\mcD$. 
\begin{prop}\label{prop:3pairs}
Let $P$ be a generic Delzant polygon with no more than three pairs of parallel sides.  Then, up to translation, the data $\mcD$ determines $P$ up to at most four possibilities.
\end{prop}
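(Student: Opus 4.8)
The plan is to reduce the three-parallel-pairs case to the analysis already carried out in Proposition \ref{2pairs} and Lemma \ref{4dto2}, with the new volume constraint from $\mcD$ handling the one extra degree of freedom that the previous perturbation argument cannot kill. Recall that in the proof of Proposition \ref{2pairs} the ambiguities were: (i) individual edge lengths inside a parallel pair, (ii) which normals are repeated, and (iii) signs of edges. For two parallel pairs, issue (i) was resolved because the discrepancy vector $\alpha_1 e_1 + \alpha_2 e_2$ had to vanish, and $\{e_1,e_2\}$ is a basis of $\bbR^2$. With three parallel pairs $e_1, e_2, e_3$ (with parallels $e_{i1}, e_{i2}, e_{i3}$), the same subtraction of the two closure relations gives
\[
\alpha_1 e_1 + \alpha_2 e_2 + \alpha_3 e_3 = 0,
\]
which now has a nontrivial one-parameter family of solutions because $\{e_1,e_2,e_3\}$ is linearly dependent in $\bbR^2$. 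So the edge-length redistribution among the three pairs is \emph{not} determined by $\mcD$ via lengths alone; this is exactly where the volume must enter.

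First I would set up the length parametrization explicitly: writing the perturbed polygon's edges as $(1+\alpha_k)e_k$ and $e_{ik} - \alpha_k e_k$ for $k=1,2,3$ (the sign fixed as in Proposition \ref{2pairs}), constrained by the single linear relation $\alpha_1 e_1 + \alpha_2 e_2 + \alpha_3 e_3 = 0$, so the family is genuinely one-dimensional, parametrized by a single real $t$. Then I would compute $\text{vol}(P(t))$ as a function of $t$. Since translating an edge of a convex polygon parallel to itself changes the area by a quantity that is \emph{quadratic} in the displacement parameter (the area of a polygon is a quadratic form in the support-function / edge-length data), $\text{vol}(P(t))$ is a quadratic polynomial in $t$. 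Setting it equal to the known $\text{vol}(P)$ from $\mcD$ gives at most two solutions for $t$, hence at most two valid length-distributions compatible with all of $\mcD$. I would need to check that the leading coefficient of this quadratic is nonzero — i.e. that the volume genuinely varies along the family — which should follow from $e_1, e_2, e_3$ being pairwise non-parallel (they are the three distinct normal directions), so the deformation is not a pure translation of the polygon; this is the one place requiring a short explicit computation.

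Having pinned the edge lengths down to two choices, I would invoke the perturbation machinery from the proof of Proposition \ref{2pairs} verbatim to handle the remaining discrete ambiguities (which normals are repeated, and the sign/subpolygon issues), noting that "generic" here again means no subpolygons and a unique choice of repeated normals, and that these perturbations of the $\lambda_i$ can be taken small enough not to disturb the volume-based selection appreciably — or, more cleanly, that for generic $P$ there is simply no competing polygon with different repeated normals at all. Finally, applying Lemma \ref{4dto2} to each of the (at most two) admissible edge-length lists $e_1,\dots,e_d$ and its negative yields the two polygons $P_{e_1,u_1}$ and $P_{e_1,-u_1}$ up to translation; combined with the factor of two from the volume quadratic, this gives at most $2 \times 2 = 4$ possibilities for $P$ up to translation, as claimed. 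The main obstacle I anticipate is the bookkeeping needed to verify that the volume is a nonconstant quadratic along the one-parameter family — everything else is a reassembly of arguments already in the paper.
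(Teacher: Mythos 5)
Your proposal follows essentially the same route as the paper: reduce to the one-parameter family of length redistributions cut out by $\alpha_1 e_1+\alpha_2 e_2+\alpha_3 e_3=0$, observe that the volume is a quadratic polynomial in the parameter (the paper justifies this by decomposing $\text{vol}(P_t)-\text{vol}(P)$ into parallelograms and trapezoids rather than by citing the quadratic dependence of area on support data, but the content is the same), conclude that matching the heard volume leaves at most two length distributions, and then combine the genericity/perturbation argument of Proposition \ref{2pairs} with Lemma \ref{4dto2} to get at most four polygons up to translation. The one point you flag --- that the polynomial $At+Bt^2$ must not vanish identically along the family --- is indeed the only step the paper itself leaves implicit, so your plan is sound and matches the paper's proof.
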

\begin{proof}
If $P$ has at most two pairs of parallel sides we have seen that the first three items in $\mcD$ determine $P$ up to two possibilities. So we assume that $P$ has three pairs of parallel sides.  Note that the edge vectors which are not in a parallel pair are determined up to sign, while the lengths of the individual edges in a parallel pair are a priori not determined. We will show that for every choice of edge signs and choice of repeated normals there are in fact only two choices for these lengths. Then we proceed as in the proof of Proposition \ref{2pairs}: if necessary, we perturb $P$ slightly a finite number of times to avoid situations which would correspond to different choices of signs for the edges or different choices of repeated normals. Since there are only a finite number of choices for the lengths of the parallel edges, we have a finite number of ``bad" cases to perturb away. 

Let $\{e_1,\ldots,e_d\}$ be the set of edge vectors of $P$.  For notational simplicity, we assume that the edges which have a parallel partner are $e_1,e_2,$ and $e_3$.  Let $e_{i1}, e_{i2}, e_{i3}$ be the edges parallel to $e_1, e_2, e_3$, respectively.  We have seen that for a generic polygon $P$ the ordering of the edges, the signs of the normals, and the collection of repeated normals is completely determined, once we choose an initial vector and the sign of its normal. 
Therefore another polygon with the same data $\mcD$ has the same edges as $P$ except the edges $\{e_1,e_2,e_3, e_{i1}, e_{i2},e_{i3}\}$ may be replaced by 
\begin{displaymath}
\{e_1-\alpha_1e_1,e_2-\alpha_2e_2,e_3-\alpha_3e_3, e_{i1}-\alpha_{1}e_{1}, e_{i2}-\alpha_{2}e_{2},e_{i3}-\alpha_{3}e_{3}\};
\end{displaymath}
note that if we subtract $\alpha_1e_1$ from the edge $e_1$, then we must subtract the same quantity from $e_{i1}$ in order for the sum of the lengths to be preserved (cf. Proposition \ref{2pairs}).  We must have 
\begin{displaymath}
e_1-\alpha_1e_1+e_2-\alpha_2e_2+e_3-\alpha_3e_3+ e_{i1}-\alpha_{1}e_{1}+ e_{i2}-\alpha_{2}e_{2}+e_{i3}-\alpha_{3}e_{3}+\sum_{i \in I} e_i=0
\end{displaymath}
where $I$ is the set of indices in $\{1, \ldots, d\}$ which are distinct from $\{1,2,3,i1,i2,i3\}$.  We also have
\begin{displaymath}
e_1+e_2+e_3+\cdots+ e_{i1}+\cdots+ e_{i2}+\cdots+e_{i3}+\cdots+e_d=0.
\end{displaymath}
Therefore
\begin{displaymath}
\alpha_1e_1+\alpha_2e_2+\alpha_3e_3=0.
\end{displaymath}
We see that all polygons with the same data $\mcD$, same choice of sign for the edge vectors, and same choice of repeated normals as $P$ come from a linear relation among the vectors $e_1,e_2$, and $e_3$. 
Figure~\ref{fig:Pt} shows a polygon $P_0$ with three pairs of parallel sides and a modification $P_t$ of $P_0$ with the same normals and sums of lengths.

\begin{figure}[h]
\centering
\subfigure[$P_0$]
{\includegraphics[scale=0.75]{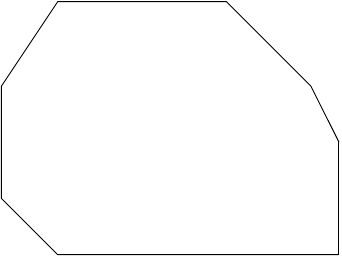}}
\hspace{2cm}
\subfigure[$P_t$]
{\includegraphics[scale=0.75]{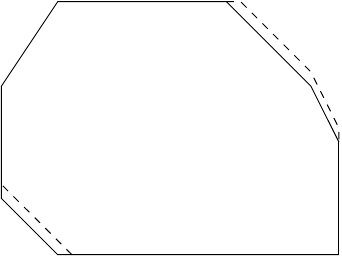}}
\caption{$P_0$ and $P_t$}
\label{fig:Pt}
\end{figure}

We will show that $\text{vol}(P_t)$ is a degree two polynomial in $t$. To this end, fix $t$ and consider the quantity $\text{vol}(P_t)-\text{vol}(P)$. This is in fact a (signed) sum of volumes of parallelograms and trapezoids as in Figure~\ref{fig:modifying}, where the solid lines are edges of $P$ and the dashed lines are edges of $P_t$.

\begin{figure}
\centering
\includegraphics[scale=0.7]{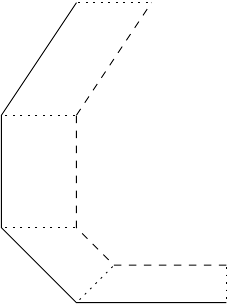}
\caption{A modified piece of polygon}
\label{fig:modifying}
\end{figure}

\begin{itemize}
\item For each parallelogram in the sum, only the length of one of the sides depends on $t$; it is in fact proportional to $t$. The angles of the parallelogram as well as the lengths of the sides parallel or equal to an edge in $P$ do not depend on $t$. So the volume of a parallelogram is of the form $At$, where $A$ only depends on $P$. 
\item As for the trapezoids whose volumes appear in the sum, their angles are fixed.  The length of the side which is an edge in $P$ is also fixed, while the lengths of the sides transversal to this side are again proportional to $t$. The length of the side parallel to an edge in $P$ is of the form $l+ct$ where $l$ and $c$ are constants independent of $t$. So the volume of such a trapezoid is of the form $At+Bt^2$ where  $A$  and $B$ only depend on $P$.
\end{itemize}
Thus $\text{vol}(P_t)=\text{vol}(P)+At+Bt^2$ for some constants $A$ and $B$, and there is at most one nonzero value of $t$ for which $\text{vol}(P_t)=\text{vol}(P)$.  Hence there are at most two choices for the lengths of the individual edges in a parallel pair.  Taking the same definition of generic as in Proposition \ref{2pairs} and applying Lemma \ref{4dto2} to each of the two possible sets of edge vectors, we see that up to translation there are at most four possibilities for $P$.
\end{proof}

\begin{rem}
When the number of parallel pairs for the polygon $P$ is $p\geq 3$, the family of polygons with data $\mcD$ is a priori of dimension $p-2$. There is one parameter per linear relation among the parallel edges. The volume of each polygon in such a family is a polynomial of degree $2$ in each of the parameters. The condition that the volume is fixed reduces the number of degrees of freedom by one, leaving a $p-3$ parameter family of polygons with data $\mcD$. For $p \geq 4$, there is no longer a finite number of polygons determined by $\mcD$.
\end{rem}

\section{Zoology}\label{sec:zoo}

The goal of this section is to discuss the set of Delzant polygons to which our theorems apply. 
We have seen that our methods can be applied to all Delzant triangles and generic Delzant polygons in $\bbR^2$ without too many pairs of parallel sides, so it makes sense to ask if such polygons are ``frequent." 
\begin{thm} \label{open_one_pair_parallel}
The set of all Delzant polygons in $\bbR^2$ with $d \geq 5$ sides and one pair of parallel sides is a nonempty, proper open set in the set of all Delzant polygons in $\bbR^2$. The same holds for the set of all Delzant polygons with at most three pairs of parallel sides.
\end{thm}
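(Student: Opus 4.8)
\noindent The plan is to first pin down the topology on the set of Delzant polygons and then reduce both assertions to elementary combinatorics of complete fans. I would use the \emph{combinatorial-type} stratification: two Delzant polygons have the same type when they have the same cyclically ordered list of primitive outward normals $u_1,\dots,u_d$, and within a fixed type the polygons are parametrized by the support constants $\lambda_1,\dots,\lambda_d$ of \S\ref{sec:toric_geom}, which range over the open subset of $\bbR^d$ defined by the finitely many strict inequalities forcing each halfspace $\{x\cdot u_i\le\lambda_i\}$ to contribute a genuine edge. Topologizing the set of all Delzant polygons as the resulting disjoint union of these open pieces, openness is immediate: the number of edges $d$ and the number of pairs of parallel edges --- the latter being the number of antipodal pairs occurring among the $u_i$ --- are constant on each combinatorial type, so each of the two sets in the statement is a union of combinatorial types, hence open (indeed clopen, which is not asked about).

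\noindent I would then settle properness and the easy half of nonemptiness by exhibiting fans. The Delzant triangle of Example~\ref{exa:CP2} has $d=3$, so it is a Delzant polygon outside the first set; and the complete fan with rays $(1,0),(1,1),(0,1),(-1,1),(-1,0),(-1,-1),(0,-1),(1,-1)$ listed by increasing angle is smooth --- each of the eight consecutive $2\times2$ determinants equals $1$ --- and, being symmetric under $x\mapsto-x$, realizes a Delzant octagon with four pairs of parallel edges, so it lies outside the second set. (Every complete smooth toric surface is projective, so such fans are realized by honest Delzant polygons; equivalently, generic support constants work.) Nonemptiness of the second set is then clear from any Delzant triangle.

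\noindent The substantive point is nonemptiness of the first set: producing, for some $d\ge5$, a Delzant polygon with \emph{exactly one} pair of parallel edges. I would build its fan by iterated corner subdivision --- toric blow-ups of fixed points, i.e.\ replacing a pair $u_i,u_{i+1}$ of consecutive rays by $u_i,\,u_i+u_{i+1},\,u_{i+1}$, which automatically preserves smoothness. Starting from the $\mathbb{CP}^2$-fan $(1,0),(0,1),(-1,-1)$, inserting $(-1,0)=(0,1)+(-1,-1)$ gives the smooth fan $(1,0),(0,1),(-1,0),(-1,-1)$ with the single antipodal pair $\{(1,0),(-1,0)\}$, and inserting $(-1,1)=(0,1)+(-1,0)$ gives the pentagon fan $(1,0),(0,1),(-1,1),(-1,0),(-1,-1)$, still with only that pair; a generic choice of support constants realizes an honest Delzant pentagon. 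To reach every $d\ge5$, I would keep inserting mediants \emph{only} into the open cone between $(0,1)$ and $(-1,0)$ and its descendants: every newly created ray then lies in the open second quadrant, which contains no antipodal pair of rays, while the negative of each such ray lies in the open cone between $(-1,-1)$ and $(1,0)$, which is never subdivided --- so no second pair of parallel edges can ever appear. The main obstacle is exactly this bookkeeping, ensuring that no extra pair of parallel edges sneaks in during the blow-ups; the device of confining all new rays to a single cone strictly contained in a halfplane is what makes it work, and by comparison openness and properness are soft.
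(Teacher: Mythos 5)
Your proof is correct, but it follows a genuinely different route from the paper's. The paper invokes Theorem~\ref{corner_chopping} to parametrize all Delzant polygons with $d\geq 5$ sides by tuples $(A,P,\alpha,l)$ with $A\in SL(2,\bbZ)$, $P$ a Hirzebruch polygon, $\alpha$ discrete chopping data and $l$ the chopped edge lengths; it then argues that the subset $\mathcal{C}^1_P$ of choppings creating extra parallel pairs is nonempty (chop opposite corners of a rectangle) and proper (choppings at the right angle opposite the acute angle, with ``many other examples'' left to inspection), openness being implicit in the fact that the parallel-pair count depends only on the discrete data. You instead stratify by the cyclically ordered list of primitive normals (the fan) with support constants as continuous parameters, which makes openness equally automatic, and you replace the paper's by-inspection step with an explicit, verifiable construction: iterated mediant insertion starting from the $\mathbb{CP}^2$ fan, confined to the open cone between $(0,1)$ and $(-1,0)$ so that no new antipodal pair can appear, producing for every $d\geq 5$ a Delzant $d$-gon with exactly one parallel pair; your triangle and symmetric octagon handle properness. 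What your version buys is self-containedness (no appeal to the Fulton/Hirzebruch classification, which is only used here to organize the parametrization) and a sharper bookkeeping argument where the paper waves at a picture; what the paper's version buys is the chopping-count framework it reuses immediately afterwards to discuss the probabilities $p(d)$. One shared caveat, which you actually handle more explicitly than the paper: the openness claim is only true for the stratified topology you (and, implicitly, the paper) impose --- under, say, Hausdorff distance a pentagon is approximated by hexagons obtained by chopping a tiny corner, possibly introducing a second parallel pair --- so making the topology explicit, as you do, is a genuine improvement rather than a liberty.
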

We prove this theorem below.  A priori one could hope to prove an analogous theorem for Delzant polygons without parallel sides. In fact, all Delzant polygons with four or more sides have at least one pair of parallel sides.  To explain this claim, we begin by recalling that a Hirzebruch surface is a K\"ahler toric surface. It is in fact $\mathbb{P}(\mathcal{O}\otimes\mathcal{O}(r))$, the projectivization of the vector bundle $\mathcal{O}\otimes\mathcal{O}(r)$ over $\bbC \mathbb{P}^1$ for some integer $r$. For our purposes it is enough to draw a picture of the moment polygons of Hirzebruch surfaces:

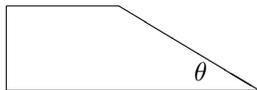
\begin{figure}[h]
\centering
\begin{center}
\setlength{\unitlength}{5mm}
\begin{picture}(11.25,3)(0,0)
\put(.5,.5){\line(1,0){6.75}}
\put(.5,.5){\line(0,1){2.25}}
\put(.5,2.75){\line(1,0){3}}
\put(3.5,2.75){\line(5,-3){3.7}}
\put(5.5,.75){$\theta$}
\end{picture}
\end{center}
\caption{The moment polygon of a Hirzebruch surface with $\tan \theta = \frac{1}{r}$, where $r$ ranges over the nonnegative integers}
\end{figure}

\noindent Note that if $d=4$, knowing the number of repeated normals allows us to construct $P$, up to translation and two choices; $P$ is either a parallelogram or the moment polygon of a Hirzebruch surface.

In fact moment polygons of Hirzebruch surfaces are in some sense the building blocks from which we can get all Delzant polygons.
\begin{thm}\label{corner_chopping}
Given a Delzant polygon $P$ with $d \geq 5$ sides, there is an $SL(2,\bbZ)$ transformation that takes $P$ into a corner chopping of a Delzant polygon of a Hirzebruch surface.
\end{thm}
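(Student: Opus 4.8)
\emph{Proof strategy.} The plan is to pass to the inner normal fan of $P$ and then peel vertices off one at a time until only four edges remain. A Delzant polygon $P$ with $d$ edges carries a complete smooth fan $\Sigma_P$ in $\bbR^2$ whose rays are spanned by the primitive inward edge normals $v_1,\dots,v_d$, listed cyclically; the Delzant condition says exactly that $\{v_i,v_{i+1}\}$ is a $\bbZ$-basis of $\bbZ^2$ for each $i$ (indices mod $d$). An element of $SL(2,\bbZ)$ acts on $P$ by acting on $\Sigma_P$, and it commutes with corner chopping since the latter is an intrinsic operation. Chopping the vertex between the edges with normals $v_i$ and $v_{i+1}$ creates one new edge whose normal $w$ must lie in the cone spanned by $v_i,v_{i+1}$ and form a basis with each of them; writing $w=\alpha v_i+\beta v_{i+1}$ forces $\alpha=\beta=1$, so a corner chop is precisely the star subdivision inserting the ray $\bbR_{\ge0}(v_i+v_{i+1})$, the depth of the cut being a free parameter. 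Conversely, deleting a ray $v_j$ merges the two cones adjacent to it; the outcome is again a smooth complete fan exactly when $\det(v_{j-1},v_{j+1})=1$. Defining $b_j\in\bbZ$ by $v_{j-1}+v_{j+1}=b_jv_j$ (possible since $\{v_{j-1},v_j\}$ is a basis, the coefficient of $v_{j-1}$ being pinned to $-1$ by comparing $\det(v_{j-1},v_j)$ with $\det(v_j,v_{j+1})$), one computes $\det(v_{j-1},v_{j+1})=b_j$, so deletion is legal precisely when $b_j=1$. In that case, taking the support constants of the new polygon $P'$ equal to those of $P$ on the surviving edges, one checks that $P'\supseteq P$, that only the two edges neighbouring the deleted one get extended, and hence that $P$ is the corner chopping of $P'$ along the line carrying the deleted edge. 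So ``un-chopping'' $P$ is possible iff $\Sigma_P$ has a ray with $b_j=1$.

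The heart of the matter is the following claim: \emph{if $d\ge5$ then some $b_i=1$}, and I expect this to be the main obstacle. A cheap input is the winding argument: the $v_i$ turn once around the origin, so with $\psi_i\in(0,\pi)$ the angle from $v_i$ to $v_{i+1}$ one has $\sum_i\psi_i=2\pi$, hence $\sum_i(\psi_{i-1}+\psi_i)=4\pi$, and for $d\ge5$ some index satisfies $\psi_{i-1}+\psi_i<\pi$, which forces $\det(v_{i-1},v_{i+1})>0$, i.e.\ $b_i\ge1$. This only gives $b_i\ge1$, not $b_i=1$, and promoting it is where the structure theory of toric surfaces must enter. One route uses the identity $\sum_{i=1}^d b_i=3d-12$ (which follows from $-K_{\M_P}=\sum_iD_i$ for the invariant divisors $D_i$, from $\chi_{\mathrm{top}}(\M_P)=d$, from Noether's formula $K^2+\chi_{\mathrm{top}}=12$, and from $D_iD_{i+1}=1$, $D_iD_k=0$ for non-adjacent indices, giving $\sum_i b_i=-\sum_iD_i^2=3d-12$) together with the classification of complete smooth fans on four rays; a cleaner route is to cite that the minimal rational surfaces are $\mathbb{CP}^2$ and the Hirzebruch surfaces, to note that any $(-1)$-curve on a smooth complete toric surface is automatically torus-invariant (hence is some $D_i$ with $b_i=1$), and to observe that $d\ge5$ excludes $\mathbb{CP}^2$ (which has $d=3$) and the Hirzebruch surfaces (which have $d=4$). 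Either way, when $d\ge5$ a ray with $b_i=1$ exists.

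Granting this claim, I would conclude by induction on $d$. For the base case $d=4$: after an $SL(2,\bbZ)$-transformation taking $v_1=(1,0)$ and $v_2=(0,1)$, a direct computation shows that a complete smooth fan on four rays has $(b_1,b_2,b_3,b_4)$ equal to a cyclic rotation of $(0,m,0,-m)$ for some $m\ge0$, which is the fan of the Hirzebruch surface $\mathbb{F}_m$; hence a Delzant quadrilateral is $SL(2,\bbZ)$-equivalent to a Hirzebruch polygon. For $d\ge5$, the claim produces a ray with $b_i=1$; un-chopping there yields a Delzant polygon $P'$ with $d-1$ edges of which $P$ is a corner chopping, and by the inductive hypothesis some $A\in SL(2,\bbZ)$ turns $P'$ into an iterated corner chopping of a Hirzebruch polygon. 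Since $SL(2,\bbZ)$ commutes with corner chopping, $AP$ is then an iterated corner chopping of that same Hirzebruch polygon, which is exactly the assertion of the theorem. The delicate point throughout is the Key Lemma of the second paragraph; everything else is bookkeeping with the fan and the edge data.
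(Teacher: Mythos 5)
The paper does not actually prove this theorem: it defers entirely to the reference [Fulton] for the proof. Your proposal is, in substance, a reconstruction of that standard argument from toric surface theory, and its skeleton is sound: corner chops are exactly star subdivisions inserting $v_i+v_{i+1}$, deleting a ray is a legal inverse operation precisely when $b_j=\det(v_{j-1},v_{j+1})=1$, the four-ray smooth complete fans are exactly the Hirzebruch fans, and one inducts downward once a ray with $b_j=1$ is found. Your identification of the key lemma (existence of some $b_i=1$ when $d\ge 5$) as the crux is exactly right. Of the two routes you offer for it, the second one is correct and complete modulo standard citations: a $(-1)$-curve on a smooth complete (hence projective) toric surface has negative self-intersection, so it cannot be moved by the torus and must be one of the invariant divisors $D_i$, giving $b_i=-D_i^2=1$; and $d\ge 5$ rules out the minimal rational surfaces $\mathbb{CP}^2$ and $\mathbb{F}_m$, so such a curve exists. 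The first route is not a proof as stated: the identity $\sum_i b_i=3d-12$ by itself does not exclude sequences with every $b_i\ne 1$ (e.g.\ it is a single linear constraint, compatible a priori with entries in $\{\dots,-1,0,2,3,\dots\}$), and the winding argument only yields $b_i\ge 1$, as you note. If you want a self-contained combinatorial argument in place of the minimal-model citation, the standard one locates two opposite rays, normalizes them to $(0,\pm 1)$, and examines the ray of maximal first coordinate in one open half-plane, where $x_{i-1}+x_{i+1}=b_ix_i$ forces $b_i\in\{1,2\}$ and a short case analysis produces a $1$. One last caveat, directed at the statement rather than at you: as literally defined in the paper, a ``corner chopping'' happens at a subset of the vertices of the original polygon, whereas your induction (and any correct proof) produces an \emph{iterated} chopping, which may chop vertices created by earlier chops; this is what the theorem must mean, since a single round of chopping a quadrilateral yields at most eight edges.
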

The corner chopping of a convex polytope $P$ at a subset of the set of vertices of $P$ is a polytope $P'$ which is obtained from $P$ by deleting a neighborhood of each vertex in the subset and replacing it with the convex hull of what is left. In the following picture we have chopped the moment polygon of a Hirzebruch surface at one vertex.

\begin{figure}[h]
\centering
\includegraphics[scale=0.60]{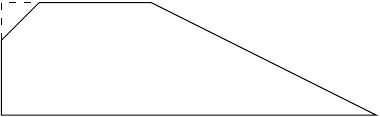}
\caption{A chopped moment polygon}
\end{figure}

It is well known that among Delzant polytopes this operation corresponds to a symplectic blow-up of the toric manifold associated with the Delzant polytope.  See \cite{Fulton} for a proof of Theorem \ref{corner_chopping} and \cite{kkp} for further discussion. Since an $SL(2,\bbZ)$ transformation preserves the number of parallel sides and the Delzant polygon of a Hirzebruch surface has parallel sides, we conclude that all Delzant polygons with $5$ or more sides have at least one pair of parallel sides. 

\begin{lemma}
Given a Delzant polygon $P$ in $\bbR^2$ and a vertex $v$ in $P$ there is a $1$-parameter family of choppings of $P$ at $v$.
\end{lemma}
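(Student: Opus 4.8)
The plan is to make the notion of ``chopping $P$ at $v$'' quantitative and show the cut-off corner can be taken to be a small Delzant simplex whose size is a free parameter. First I would normalize: by an $SL(2,\bbZ)$ transformation, I may assume the vertex $v$ sits at the origin with its two incident edges directed along the standard basis vectors $e_1,e_2$; the Delzant condition at $v$ guarantees this is possible, since the two primitive outward normals at $v$ form a $\bbZ$-basis. In these coordinates a neighborhood of $v$ inside $P$ looks like $\{x_1\geq 0,\ x_2\geq 0\}$ intersected with $P$, and for $t>0$ small the hyperplane $\{x_1+x_2=t\}$ cuts off exactly the triangle with vertices $0$, $te_1$, $te_2$.

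Next I would check that the resulting polygon $P_t := P\cap\{x_1+x_2\geq t\}$ is Delzant for all sufficiently small $t>0$. The only new facet has primitive outward normal $-(1,1)$, which is integral; the two new vertices $te_1$ and $te_2$ each meet this new facet together with one of the original facets through $v$, and the relevant pairs of normals, namely $\{(0,-1),-(1,1)\}$ at $te_1$ and $\{(-1,0),-(1,1)\}$ at $te_2$, are each $\bbZ$-bases — this is the combinatorial heart of the corner-chopping construction and is exactly the local picture behind Theorem \ref{corner_chopping}. For $t$ small enough the cutting line meets only the two edges incident to $v$ and no other vertex of $P$ is disturbed, so $P_t$ stays convex with the expected facet structure. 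Distinct values of $t$ in $(0,t_0)$ give distinct polygons (the new facet has length $t\sqrt 2$, say), so $\{P_t\}_{0<t<t_0}$ is the desired $1$-parameter family of choppings of $P$ at $v$.

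The main obstacle is purely a bookkeeping one: verifying that the admissible range of $t$ is genuinely an open interval $(0,t_0)$ — i.e.\ pinning down $t_0$ as the first value at which the cut line passes through a neighboring vertex of $P$ (equivalently, the length of the shorter of the two edges at $v$, measured in the normalized coordinates) — and confirming that nothing degenerates before then. I do not expect any real difficulty here, since convexity and the Delzant property are both local conditions near the chopped corner; the argument is essentially the observation that corner-chopping is a local modification with one scaling degree of freedom.
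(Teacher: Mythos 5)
Your construction is correct and begins exactly as the paper's proof does (normalizing by an $SL(2,\bbZ)$ transformation so that $v=0$ and the two incident edges lie along the positive coordinate axes), but from there the two arguments prove complementary halves of the story. You exhibit the family: you cut along $\{x_1+x_2=t\}$ and verify that the pairs of primitive normals at the two new vertices are $\bbZ$-bases, which gives existence of a $1$-parameter family of Delzant choppings. The paper instead proves that this is the \emph{only} admissible cut direction: if $a$ and $b$ are the angles the new edge makes with the two old edges, the Delzant condition forces $\tan a = 1/m$ and $\tan b = 1/n$ for integers $m,n$, and since $a+b=\pi/2$ the tangent addition formula gives $\tan a \tan b = 1$, hence $mn=1$ and $a=b=\pi/4$; the length of the new edge is then the single remaining parameter. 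Your argument does not rule out choppings in other rational directions, and that uniqueness is what the paper actually needs downstream: in the proof of Theorem \ref{open_one_pair_parallel} the chopping data set $\mathcal{C}_{P}$ must be finite, which requires that at each chopped vertex the direction of the cut is forced and only the length varies. If the lemma is read as a bare existence statement your proof suffices, but to support its role in the paper you should add the short computation showing $m=n=1$ (equivalently, that $-(1,1)$ is the only primitive normal completing a $\bbZ$-basis with each of $(-1,0)$ and $(0,-1)$ while cutting off a corner of the positive quadrant).
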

\begin{proof}
The proof is very simple. Using an $SL(2,\bbZ)$ transformation if necessary, we may assume that $v = 0 \in \bbR^2$ and that the two edges of $P$ meeting at $v$ lie along the positive coordinate axes.  Now let $a$ and $b$ be the two new angles in the triangle determined by the chopping of $P$ at $v$. 

\begin{figure}[h]
\centering
\includegraphics[scale=0.80]{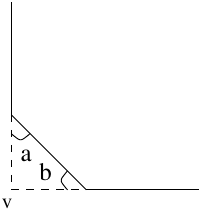}
\end{figure}

These two angles sum to $\frac{\pi}{2}$, and the Delzant condition implies that there are integers $m$ and $n$ such that
\begin{displaymath}
\tan(a)=\frac{1}{m}, \:\:\: \tan(b)=\frac{1}{n}.
\end{displaymath}
Now
\begin{displaymath}
\tan(a+b)=\frac{\tan(a)+\tan(b)}{1-\tan(a)\tan(b)}
\end{displaymath}
and $\tan(a+b) = \tan(\frac{\pi}{2}) = \infty.$
So $\tan(a)\tan(b)=1$ which implies $mn=1$, and thus the angles $a$ and $b$ are both $\frac{\pi}{4}$. The one parameter corresponds to the length of the new side.
\end{proof}

We now prove Theorem \ref{open_one_pair_parallel}.
\begin{proof}
Given the characterization of Delzant polygons provided by Theorem \ref{corner_chopping}, we see that the space of all Delzant polygons with $d$ edges is parametrized by
\begin{displaymath}
\{(A,H, \alpha,l): A\in SL(2,\bbZ), H \in \mathcal{B}, \alpha \in \mathcal{C}_{H}, l \in \bbR^{d-4}\},
\end{displaymath}
where $\mathcal{B}$ is the set of all Delzant polygons corresponding to Hirzebruch surfaces, $\mathcal{C}_{H}$ is the set of all sequences of positions at which one may chop to get a Delzant polygon from $H$, and $l$ records the lengths of the $d-4$ chopped edges associated to $\alpha$.
Note that for a fixed $d$, $\mathcal{C}_{H}$ is a finite set.

Polygons with more than one pair of parallel edges correspond to specific choppings, i.e., to specific sequences $\alpha$. The set of such polygons is parametrized by
\begin{displaymath}
\{(A,H, \alpha,l): A\in SL(2,\bbZ), H \in \mathcal{B}, \alpha \in \mathcal{C}^1_{H}, l \in \bbR^{d-4}\}
\end{displaymath}
where $\mathcal{C}^1_{H}$ is the subset of $\mathcal{C}_{H}$ which gives rise to more pairs of parallel sides. We claim that $\mathcal{C}^1_{H}$ is a nonempty proper subset of $\mathcal{C}_{H}$. It is proper because there are choices for the chopping data which will not give rise to more pairs of parallel sides. As one example, consider all choppings involving only the right angle of $H$ which is opposite the acute angle of $H$:

\vspace{.1in}
\hspace*{1.5in}
\includegraphics[scale=0.60]{chopping.eps}
\hspace{1in}
\\

One can see by inspection that there are many other examples. Since there are $\alpha$'s for which there \emph{are} more pairs of parallel sides introduced through corner chopping, $\mathcal{C}^1_{H}$ is nonempty. For example, every $\alpha$ which involves chopping opposite corners of a Delzant rectangle $H$ introduces at least one more pair of parallel sides.  Hence both $\mathcal{C}^1_{H}$ and the complement of $\mathcal{C}^1_{H}$ in $\mathcal{C}_{H}$ are proper, nonempty subsets of $\mathcal{C}_{H}$. 
\end{proof}
The proof above shows that a random Delzant polygon with $d$ sides has a positive probability $p(d)$ of having no more than three pairs of parallel sides. It also shows that $p(d)$ is not one. In fact, $p(d)$ is the number of choppings which give rise to no more than three pairs of parallel sides divided by the total number of choppings which give rise to a polygon with $d$ edges. 

We conclude our discussion of choppings of Delzant polygons by proving Proposition \ref{number_facets}, which tells us that we can recover the number of vertices of a Delzant polygon from the spectrum of the real manifold naturally associated to a toric surface.

\begin{proof}
Let $M^4$ be a symplectic toric manifold, and let $d$ be the number of sides of the associated Delzant polygon $P$.  Theorem \ref{corner_chopping} tells us that if $d \geq 5$ then $M$ is symplectomorphic to a blow up of $\mathbb{CP}^2$, i.e., it is symplectomorphic to a connected sum
\begin{displaymath}
 \mathbb{CP}^2\#(d-3)\mathbb{CP}^2.
\end{displaymath}
Since the real manifold associated to $\mathbb{CP}^2$ is $\mathbb{RP}^2$ we see that the real manifold $M_\bbR$ is diffeomorphic to
\begin{displaymath}
 \mathbb{RP}^2\#(d-3)\mathbb{RP}^2, 
\end{displaymath}
which has Euler characteristic $4-d$.  As one can check directly, this expression for the Euler characteristic also holds when $d=3$ or $d=4$.  For a real $2$-dimensional manifold, the spectrum determines the Euler characteristic; this is a simple consequence of the Gauss-Bonnet theorem and the fact that the spectrum determines the integral of the scalar curvature via the asymptotic expansion of the usual heat trace (e.g., \cite[p. 222]{BGM}).  Hence the spectrum of $M_{\bbR}$ determines $d$, and thus determines the number of vertices of $P$.
\end{proof}

\section{The spectrum of a canonical line bundle}\label{sec:bundle}

In this section we will give a complete and relatively short argument answering a stronger version of Question \ref{q:ours}. Assume that $(M,\omega)$ is a toric manifold and the cohomology class of $\omega$ has integer coefficients, i.e., $[\omega]\in H^2(M,\bbZ)$. There is a line bundle $L$ over $M$ naturally associated to this data, namely a line bundle such that $c_1(L)=[\omega]$. There is also a connection on $L$ whose curvature is $\omega$, and the connection allows us to define a Laplace operator $\triangle_L$ on $\mathcal{C}^{\infty}(L)$. As an example, one may consider the toric manifold associated to a Delzant polytope with vertices in $\bbZ^n$ and endowed with the reduced metric.  When we consider the equivariant spectrum of the Laplacian acting on smooth sections of $L$, we are able to recover the Delzant polytope exactly.   

\begin{thm} 
The equivariant spectrum of the Laplacian $\triangle_L:\mathcal{C}^{\infty}(L)\rightarrow \mathcal{C}^{\infty}(L)$ determines the Delzant polytope of $M$.
\end{thm}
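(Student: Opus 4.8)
The plan is to isolate, inside the equivariant spectrum of $\triangle_L$, the eigenspace consisting of the holomorphic sections of $L$, and then to reconstruct $P$ from the weights of the $\bbT^n$-action on that eigenspace via the classical description of geometric quantization on a toric manifold.

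First I would observe that $L$ is naturally a holomorphic Hermitian line bundle: the given connection has curvature $\omega$, which is of type $(1,1)$ with respect to the complex structure $J$ making $g$ K\"ahler (a toric metric satisfies $g(X,Y)=\omega(X,JY)$, so $\omega$ is exactly the K\"ahler form of $(g,J)$), and so the $(0,1)$-part of the connection endows $L$ with a holomorphic structure for which the connection is the Chern connection. Writing $\triangle_L=\nabla^*\nabla$ for the associated connection Laplacian on $\mathcal{C}^\infty(L)$, a Weitzenb\"ock (Bochner--Kodaira) identity on sections, that is, on $(0,0)$-forms with values in $L$, yields $\triangle_L = 2\,\overline{\partial}^*\overline{\partial} + c$ for a constant $c>0$; both the constancy and the positivity come from the fact that the curvature $\omega$ is the K\"ahler form itself, so the zeroth-order term equals $\operatorname{tr}_\omega\omega = n$ up to the normalization built into $\triangle_L$. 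Since $\overline{\partial}^*\overline{\partial}$ is nonnegative on sections with kernel exactly $H^0(M,L)$, and $H^0(M,L)\neq 0$ (the vertices of the lattice polytope $P$ lie in $P\cap\bbZ^n$), the smallest eigenvalue of $\triangle_L$ equals $c$ and its eigenspace is precisely $H^0(M,L)$. Thus, given the equivariant spectrum, I pick out the least eigenvalue and record the weights of the $\bbT^n$-representation on the corresponding eigenspace, namely the character of $H^0(M,L)$.

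The second ingredient is the standard toric description of that representation. For the natural equivariant lift of the $\bbT^n$-action to $(L,\nabla)$ one has the $\bbT^n$-module isomorphism $H^0(M,L)\cong\bigoplus_{m\in P\cap\bbZ^n}\bbC_m$, each lattice point of $P$ occurring with multiplicity one. This can be read off directly from Delzant's construction recalled in \S\ref{sec:toric_geom}: sections of $L$ correspond to the $N$-invariant monomials on $\bbC^d$, the monomial indexed by $a\in\bbZ_{\geq 0}^d$ descending to $M$ precisely when the associated point of $(\bbR^n)^*$ lies in $P$, its $\bbT^n=\bbT^d/N$-weight being that lattice point; alternatively it follows from Atiyah--Bott--Lefschetz localization at the vertices of $P$ (cf.\ \cite{Fulton}). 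Consequently the set of weights occurring in the ground-state eigenspace of $\triangle_L$ is exactly $P\cap\bbZ^n$.

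Finally, because $[\omega]\in H^2(M,\bbZ)$ the polytope $P$ is a lattice polytope, so its vertices lie in $\bbZ^n$ and $P=\operatorname{conv}(P\cap\bbZ^n)$; hence $P$ is reconstructed as the convex hull of the weights attached to the smallest eigenvalue of $\triangle_L$, exactly once the equivariant structure on $L$ has been fixed, and up to a lattice translation otherwise. I expect the only genuinely nontrivial step to be the equivariant identification $H^0(M,L)\cong\bigoplus_{m\in P\cap\bbZ^n}\bbC_m$, together with the bookkeeping that pins down the normalization so the weights reproduce $P$ itself and not a translate; everything else, namely that the holomorphic sections form the ground state, that $H^0(M,L)\neq 0$, and that a lattice Delzant polytope is the convex hull of its lattice points, is routine. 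Note that no tensor power $L^{\otimes k}$ is needed: it is precisely the integrality hypothesis that already lets $L$ alone ``see'' enough lattice points to determine $P$, which is what makes this argument short.
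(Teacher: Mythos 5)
Your proposal is correct in outline but takes a genuinely different route from the paper. The paper stays within the heat-trace framework of \S\ref{sec:heat}: it uses Kostant's formula $\mathcal{L}_{X_\theta}s=\triangledown_{X_\theta}s+i\phi\ip\theta\, s$ to show that the isotropy weight of $\bbT^n$ on the fiber of $L$ over a fixed point $p$ is $\phi(p)$, and then applies Donnelly's expansion to the one-parameter subgroups $G_i$ generated by the facet normals $u_i$; the leading coefficient $e^{it v\ip\xi_i}\operatorname{vol}(F_i)/(2-2\cos t)$ (or the sum of two such terms when two facets share the normal $u_i$) determines both $\operatorname{vol}(F_i)$ and the supporting hyperplane $\{x\ip u_i=v\ip u_i\}$, and $P$ is recovered as the intersection of the resulting half-spaces. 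You instead use only the bottom of the spectrum: the Bochner--Kodaira identity $\triangle_L=2\bar\partial^*\bar\partial+c$ identifies the lowest eigenspace with $H^0(M,L)$, and the Danilov--Fulton character formula $H^0(M,L)\cong\bigoplus_{m\in P\cap\bbZ^n}\bbC_m$ recovers $P$ as the convex hull of the corresponding weights. Your argument is attractive because it needs far less spectral data (one eigenspace rather than small-time asymptotics of the full equivariant trace) and it automatically handles the parallel-facet ambiguity that forces the paper's Case 2; but it imports two external inputs the paper does not use (the K\"ahler identity pinning down the constant $c$, which requires $\triangle_L$ to be the Bochner Laplacian of the quantizing connection and the curvature to equal the K\"ahler form, and the equivariant computation of $H^0$), and it only applies to the scalar Laplacian on sections, whereas the paper's facet-by-facet method is the one that generalizes to the rest of their program (volumes and normals of faces). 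Both arguments share the same residual normalization: the moment map, equivalently the equivariant lift to $L$, is fixed only up to a translation, which the paper handles by centering $P$ at its center of mass and you handle by fixing the equivariant structure; you should state explicitly, as the paper does, which normalization you adopt, since otherwise ``determines the Delzant polytope'' should read ``up to translation.''
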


\begin{proof}
As usual $\phi$ denotes the moment map of the $\bbT^n$-action on $M$ and $P=\phi(M)$ is the Delzant polytope of $M$. Let $p$ be a fixed point for the $\bbT^n$-action and $v=\phi(p)$ a vertex in $P$. There is a $\bbT^n$-action on the total space of $L$ and therefore an induced action on the space of smooth sections of $L$, denoted $\mathcal{C}^\infty(L)$. The infinitesimal action associated to this induced action is given by Kostant's formula:
\begin{displaymath}
 \mathcal{L}_{X_\theta} s=\triangledown_{X_\theta}s+i\phi\ip\theta s,
\end{displaymath}
where $s \in \mathcal{C}^\infty(L)$, $\theta \in \bbR^n$, and $X_\theta$ is the vector field in $M$ induced by the $\bbT^n$-action. From this we see that the weight of the isotropy representation of $\bbT^n$ on the fiber of $L$ over $p$ is $\phi(p)$.

We make the following notational conventions:
\begin{itemize}
\item $e_1,\ldots, e_n$ are the edges of $P$ meeting at $v$,
\item $F_i$ is the hyperplane spanned by $\{e_1,\cdots,\hat{e_i},\cdots, e_n\}$,
\item $u_i$ is the outward normal to $F_i$ in $\bbR^n$,
\item $G_i$ is the one-parameter subgroup of $\bbT^n$ generated by $u_i$, i.e., the Lie algebra of $G_i$ is spanned by $u_i$,
\item $\beta_i$ is the weight of the isotropy representation of $G_i$ on the normal bundle to $\phi^{-1}(F_i)$ in $M$,
\item and $\xi_i$ is an element of the Lie algebra of $G_i$ such that $\beta_i(\xi_i)=1$.
\end{itemize}
As we have seen in Lemma \ref{fixed_point_set}, the fixed point set of each $G_i$ contains the pre-image via the moment map of the facets which are perpendicular to $u_i$; hence the fixed point set has at most two connected components of nonzero dimension. Fix $i \in \{1,\ldots,n\}$. We will consider two cases.

\noindent \textbf{Case 1:} Assume there is only one facet perpendicular to $u_i$, say $F_i$. This is the simplest case. The coefficient of the leading term in the heat trace corresponding to the $e^{i\xi}\in G_i$ action on $L$ is
\begin{displaymath}
\frac{e^{i\phi(p)\ip\xi}\text{vol}(F_i)}{2-2\cos(\beta_i(\xi))};
\end{displaymath}
setting $\xi=t\xi_i$ gives
\begin{displaymath}
\frac{e^{itv\ip\xi_i}\text{vol}(F_i)}{2-2\cos(t)}.
\end{displaymath}
Hence $\text{vol}(F_i)$ and $e^{sv\ip\xi_i}$, for any $s \in \mathbb{R}$ are spectrally determined.  Thus we know the quantity $v\ip\xi$ for any $\xi$ which is a multiple of $u_i$ and the spectrum determines the hyperplane containing the facet $F_i$, namely
\begin{displaymath}
\{x\in \bbR^n:x\ip u_i=v\ip u_i\}.
\end{displaymath} 

\noindent \textbf{Case 2:} Now assume that there are two facets perpendicular to $u_i$, say $F_+$ and $F_-$. Let $v_+$ and $v_-$ be vertices on $F_+$ and $F_-$, respectively. The coefficient of the leading term in the heat trace corresponding to the $e^{i\xi}\in G_i$ action on $L$ is
\begin{displaymath}
\frac{e^{iv_+\ip\xi}\text{vol}(F_+)+e^{iv_-\ip\xi}\text{vol}(F_-)}{2-2\cos(\beta_i(\xi))}.
\end{displaymath}
Again one can set $\xi=t\xi_i$ and the above formula becomes
\begin{displaymath}
\frac{e^{itv_+\ip\xi_i}\text{vol}(F_+)+e^{itv_-\ip\xi_i}\text{vol}(F_-)}{2-2\cos(t)}.
\end{displaymath}
Therefore $v_+\ip\xi$ and $v_-\ip\xi$ are spectrally determined for any $\xi$ which is a multiple of $u_i$, and so are the sets
\begin{displaymath}
\{x\in\bbR^n:x\ip u_i=v_+\ip u_i\},
\end{displaymath}
and
\begin{displaymath}
\{x\in\bbR^n:x\ip u_i=v_-\ip u_i\}.
\end{displaymath}

We can assume without loss of generality that our polytope has center of mass at the origin. The spectrum determines the hyperplane containing a facet $F$ and the ``in'' direction determines the half-space $\mathcal{H}_{F}$.  Hence we know
\begin{displaymath}
P=\bigcap_F \mathcal{H}_{F}.
\end{displaymath}
\end{proof}

\section{Concluding Remarks}

One may ask to what extent our results are optimal.  We note that the probabilites $p(d)$ mentioned in \S \ref{sec:zoo} could be calculated explicitly, providing a more concrete idea of the ``size'' of the set of Delzant polygons to which Proposition \ref{prop:3pairs} applies.  Propositions \ref{prop:3pairs}, \ref{2pairs} and \ref{prop:triangles} conclude that the equivariant spectrum determines the Delzant polygon up to a small number of possibilities; in Propositions \ref{prop:triangles} and \ref{2pairs}, the two possibilities correspond to a set of weights for the torus action and the set of conjugate transposes of these weights.  Can these two possibilities be distinguished using spectral data?  Regarding our genericity assumptions, it would be nice to have examples of symplectic toric manifolds (and their corresponding Delzant polygons) which show that these assumptions are necessary.  That is, can one construct pairs of (non-isometric) toric manifolds with the same equivariant spectrum whose Delzant polygons are either non-generic or have many pairs of parallel sides?

One obstacle to generalizing the results of \S\S \ref{sec:comb} and \ref{sec:zoo} to higher-dimensional polytopes is the lack of knowledge of the number of vertices of the Delzant polytope.  
For a toric manifold $M^{2n}$, knowledge of the number of vertices is equivalent to knowledge of any one of the following three quantities: the Euler characteristic of $M$, the Lefschetz number of a torus-induced isometry of $M$, and the number of critical points of a generic component of the moment map.  By considering the equivariant spectrum corresponding to even $p$-forms for $0<p \leq n$, one could recover the number of vertices; in fact, one only needs the multiplicity of the $0$-eigenspaces.
If one fixes the number of vertices and assumes that there are no repeated normals, our methods can be applied.  In particular, there are ``many'' Delzant polytopes in dimension three with four vertices and no repeated normals which can be distinguished by the equivariant spectrum, up to translation and a small number of possibilities.  It is also possible that using more terms in Donnelly's asymptotic expansion \eqref{eqn:don} could lead to stronger results; however, the presence of normals to faces of varying codimension hinders calculation of these higher-order terms in general.  

We should mention again that if the Delzant polygon of $M$ is generic and doesn't have too many pairs of parallel sides, then our results imply that the symplectomorphism type of $M$, and hence its diffeomorphism type, are completely determined by the equivariant and real spectra. In fact, if we know a priori that $M$ is endowed with the reduced metric, the metric on $M$ is also determined by the spectra since the polygon determines a unique reduced metric. Of course one can ask what can be said about the metric on $M$ in general.  For example, one can ask whether the real and equivariant spectra determine if the metric on $M$ is extremal in the sense of Calabi.

The relatively strict constraints on perturbation imposed by the Delzant conditions provide
another obstacle to generalizing our results to all Delzant polytopes.  These constraints are significantly relaxed for generic toric orbifolds, which correspond to rational polytopes with a labelling that encodes the local group actions.  Thus the inverse problem is to recover both the polytope and its labels from the equivariant spectrum; the authors are working on a generalization of this work to that setting.

\bibliographystyle{plain}
\bibliography{inv_spec}

\newcommand{\noopsort}[1]{} \newcommand{\printfirst}[2]{#1}
  \newcommand{\singleletter}[1]{#1} \newcommand{\switchargs}[2]{#2#1}
  \def\cprime{$'$}
\begin{thebibliography}{10}

\bibitem{m1}
Miguel Abreu.
\newblock K\"ahler geometry of toric varieties and extremal metrics.
\newblock {\em Internat. J. Math.}, 9(6):641--651, 1998.

\bibitem{m2}
Miguel Abreu.
\newblock K\"ahler geometry of toric manifolds in symplectic coordinates.
\newblock In {\em Symplectic and contact topology: interactions and
  perspectives ({T}oronto, {ON}/{M}ontreal, {QC}, 2001)}, volume~35 of {\em
  Fields Inst. Commun.}, pages 1--24. Amer. Math. Soc., Providence, RI, 2003.

\bibitem{BGM}
Marcel Berger, Paul Gauduchon, and Edmond Mazet.
\newblock {\em Le spectre d'une vari\'et\'e riemannienne}.
\newblock Lecture Notes in Mathematics, Vol. 194. Springer-Verlag, Berlin,
  1971.

\bibitem{CdaS}
Ana Cannas~da Silva.
\newblock {\em Lectures on symplectic geometry}, volume 1764 of {\em Lecture
  Notes in Mathematics}.
\newblock Springer-Verlag, Berlin, 2001.

\bibitem{Chavelbook}
Isaac Chavel.
\newblock {\em Eigenvalues in {R}iemannian geometry}, volume 115 of {\em Pure
  and Applied Mathematics}.
\newblock Academic Press Inc., Orlando, FL, 1984.
\newblock Including a chapter by Burton Randol, With an appendix by Jozef
  Dodziuk.

\bibitem{Delzant}
Thomas Delzant.
\newblock Hamiltoniens p\'eriodiques et images convexes de l'application
  moment.
\newblock {\em Bull. Soc. Math. France}, 116(3):315--339, 1988.

\bibitem{Don1}
Harold Donnelly.
\newblock Spectrum and the fixed point sets of isometries. {I}.
\newblock {\em Math. Ann.}, 224(2):161--170, 1976.

\bibitem{Fulton}
William Fulton.
\newblock {\em Introduction to toric varieties}, volume 131 of {\em Annals of
  Mathematics Studies}.
\newblock Princeton University Press, Princeton, NJ, 1993.
\newblock The William H. Roever Lectures in Geometry.

\bibitem{Gor_survey}
Carolyn~S. Gordon.
\newblock Survey of isospectral manifolds.
\newblock In {\em Handbook of differential geometry, {V}ol. {I}}, pages
  747--778. North-Holland, Amsterdam, 2000.

\bibitem{g1}
Victor Guillemin.
\newblock Kaehler structures on toric varieties.
\newblock {\em J. Differential Geom.}, 40(2):285--309, 1994.

\bibitem{Kac}
Mark Kac.
\newblock Can one hear the shape of a drum?
\newblock {\em Amer. Math. Monthly}, 73(4, part II):1--23, 1966.

\bibitem{kkp}
Yael Karshon, Liat Kessler, and Martin Pinsonnault.
\newblock A compact symplectic four-manifold admits only finitely many
  inequivalent toric actions.
\newblock {\em J. Symplectic Geom.}, 5(2):139--166, 2007.

\bibitem{Mil_tori}
J.~Milnor.
\newblock Eigenvalues of the {L}aplace operator on certain manifolds.
\newblock {\em Proc. Nat. Acad. Sci. U.S.A.}, 51:542, 1964.

\bibitem{Sunada_method}
Toshikazu Sunada.
\newblock Riemannian coverings and isospectral manifolds.
\newblock {\em Ann. of Math. (2)}, 121(1):169--186, 1985.

\bibitem{Tanno}
Sh{\v{u}}kichi Tanno.
\newblock Eigenvalues of the {L}aplacian of {R}iemannian manifolds.
\newblock {\em T\v ohoku Math. J. (2)}, 25:391--403, 1973.

\end{thebibliography}

\end{document}